\newtheorem{theorem}{Theorem}[section] 
\newtheorem{lemma}[theorem]{Lemma}     
\newtheorem{proposition}[theorem]{Proposition}
\newtheorem{conjecture}[theorem]{Conjecture}
\newtheorem*{problem*}{Problem}
\theoremstyle{definition}
\newtheorem{definition}[theorem]{Definition}
\newtheorem*{remark}{Remark}
\newcommand{\CC}{\mathbb C}
\newcommand{\PP}{\mathbb P}
\newcommand{\GG}{\mathbb G}
\newcommand{\ord}{\operatorname{ord}}
\newcommand{\exc}{\operatorname{exc}}
\title[A truncated second main theorem for algebraic tori with moving targets]{A truncated  second main theorem for algebraic tori \\
with moving targets and applications}
\author{Ji Guo}
\address{Institute of Mathematics\\ Academia Sinica\\6F, Astronomy-Mathematics Building\\No. 1, Sec. 4, Roosevelt Road \\ Taipei
10617\\Taiwan} \email{jiguo@gate.sinica.edu.tw}
\author{Chia-Liang Sun}
\address{Institute of Mathematics\\ Academia Sinica\\6F, Astronomy-Mathematics Building\\No. 1, Sec. 4, Roosevelt Road \\ Taipei
10617\\Taiwan} \email{ csun@math.sinica.edu.tw}
\author{Julie Tzu-Yueh Wang}
\address{Institute of Mathematics\\ Academia Sinica\\6F, Astronomy-Mathematics Building\\No. 1, Sec. 4, Roosevelt Road \\ Taipei
10617\\Taiwan} \email{jwang@math.sinica.edu.tw}
\begin{document}


\begin{abstract}
We establish a second main theorem for algebraic tori with slow growth moving targets with truncation to level 1.  As the first application of this result, we prove the
 Green-Griffith-Lang conjecture for projective spaces with $n+1$ components in the context of moving targets of slow growth.  
 Then we discuss the integrability of the ring of exponential polynomials in the ring of entire functions as another application.
 \end{abstract}

\thanks{2010\ {\it Mathematics Subject Classification.} Primary 30D35,  
Secondary 30A70,11J97.}
\thanks{The second named was supported by  Taiwan's MoST grant 109-2811-M-001-613  and the third named author was supported in part by Taiwan's MoST grant 108-2115-M-001-001-MY2.}

\baselineskip=16truept 
\maketitle \pagestyle{myheadings}

\section{Introduction}
Originating in the work of Osgood, Vojta, and Lang, it has been observed that there is a striking correspondence between many statements in Nevanlinna theory and statements in Diophantine approximation.  A detailed ``dictionary" between the two subjects has been constructed by Vojta \cite{Vo}.  The following conjecture can be viewed as   the complex analogue of Vojta's generalized abc conjecture (\cite[Conjecture 23.4]{vojta2009diophantine}).
\begin{conjecture}\label{ConjABC}
Let $X$ be a smooth complex projective variety, let D be a normal crossing divisor on $X$, let $K$ be the canonical divisor on $X$, and let $A$ be an ample divisor on $X$.  Then:
\begin{enumerate}
\item[{\rm (a)}] If $f:\mathbb C\to X$ be an algebraically nondegenerate analytic map, then
\begin{align}\label{truncate1}
N_f^{(1)}(D,r)\ge_{\exc} T_{K+D,f}(r)-{\rm o}(T_{ A,f}(r)).
\end{align}
\item[{\rm (b)}] For any $\epsilon>0$, there is a proper Zariski-closed subset $Z$ of $X$, depending only on $X$, $D$, $ A$, and $\epsilon$ such that  for any analytic map $f:\mathbb C\to X$ whose image is not contain in $Z$, the following
\begin{align}\label{truncate2}
N_f^{(1)}(D,r)\ge_{\exc} T_{K+D,f}(r)-\epsilon T_{ A,f}(r)
\end{align}
 holds.
\end{enumerate}
\end{conjecture}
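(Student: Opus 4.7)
Since Conjecture 1.1 is the complex analytic counterpart of Vojta's generalized $abc$ conjecture and subsumes the Green--Griffiths--Lang conjecture, neither part is known for a general smooth projective $X$; what follows is therefore a strategy rather than a guaranteed plan, and I expect any concrete progress in this paper to be restricted to settings (algebraic tori, projective space) where the Ru--Vojta filtration machinery applies, matching the abstract's announced application to $\mathbb{P}^n$ with $n+1$ hypersurface components.

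For part (a), the natural framework is Ahlfors--Carlson--Griffiths--King theory combined with the Ru--Vojta method of filtrations. First, replace $K+D$ by a large multiple $m(K+D)$ so that $|m(K+D)|$ has enough global sections to induce a map to some projective space $\mathbb{P}^N$; algebraic nondegeneracy of $f$ makes the composed map linearly nondegenerate, so Cartan's truncated Second Main Theorem yields an inequality for the counting function of a pulled-back hyperplane. Second, for each component of $D$, choose a flag of subspaces of $H^0(X, m(K+D))$ adapted to the order of vanishing along that component; a weighted Wronskian estimate then converts the Cartan inequality into one whose leading term on the characteristic side is $T_{K+D,f}(r)$ and whose counting side is $N_f^{(1)}(D,r)$, with the truncation to level one delivered by the logarithmic derivative lemma absorbing higher multiplicities into $\mathrm{o}(T_{A,f}(r))$.

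For part (b), the Vojta-style exceptional set $Z$ must be constructed so that any $f$ whose image avoids $Z$ behaves effectively as if algebraically nondegenerate. I would proceed by Noetherian induction on the dimension of the image: on each proper subvariety $Y \subset X$ arising along the way, apply part (a) to a log resolution of $(Y, D|_Y)$ and accumulate into $Z$ the loci where the restriction fails to pull $K+D$ back favorably. The delicate point is ensuring that only finitely many such subvarieties contribute and that the resulting $Z$ depends only on $X$, $D$, $A$, and $\epsilon$, not on $f$.

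The principal obstacle, and the reason the full conjecture remains open, is that the filtration step of part (a) has been executed only in special geometries (projective space via Evertse--Ferretti--Ru, abelian varieties via Noguchi--Winkelmann--Yamanoi, toric varieties via Ru--Vojta); for a general $(X,D)$ one lacks a canonical construction of filtrations on $H^0(X, m(K+D))$ yielding the correct weighted Wronskian estimate. Producing such a construction -- equivalently, a Ru--Vojta-type inequality for arbitrary $(X,D)$ -- would already be a major advance. Combining it with the Noetherian exceptional-set argument of part (b), while also controlling the dependence of $Z$ on $\epsilon$, is what I expect to be the hardest step and the reason the authors' actual results will almost certainly be formulated only for a torus-like ambient $X$ where their moving-targets SMT supplies the missing Ru--Vojta input.
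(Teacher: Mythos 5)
The statement you were asked to prove is labeled as a \emph{conjecture} in the paper, and the paper does not prove it: it is presented as the complex-analytic analogue of Vojta's generalized abc conjecture, open in general, with the paper's actual contribution being a special case (Theorem 1.2, inequality \eqref{truncate2} for algebraic tori with moving targets of slow growth). You correctly recognized this, which is the essential point; a ``proof'' here would have been a red flag.

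One substantive correction to your speculation about how the paper's partial result is obtained: the mechanism is not a Ru--Vojta filtration or weighted Wronskian argument. The truncation to level $1$ in Theorem 1.2 comes from an entirely different device, namely the GCD theorem on algebraic tori of Levin and Wang. Concretely, the paper introduces the operator $D_{\mathbf u}$ (a logarithmic-derivative twist of $G$ satisfying $G(\mathbf u)'=D_{\mathbf u}(G)(\mathbf u)$), shows via Lemma \ref{coprime} that $G$ and $D_{\mathbf u}(G)$ are coprime when the $u_i$ are multiplicatively independent modulo $K_{\mathbf u}$, and then bounds the difference $N_{G(\mathbf u)}(0,r)-N^{(1)}_{G(\mathbf u)}(0,r)$ by the gcd counting function $N_{\gcd}(G(\mathbf u),D_{\mathbf u}(G)(\mathbf u),r)$, which Theorem \ref{gcd_moving_unit} makes $\le_{\exc}\epsilon\,T_{\mathbf u}(r)$. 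The untruncated lower bound then comes from the Dethloff--Tan moving-target second main theorem. So the ``missing Ru--Vojta input'' you anticipate is supplied by a gcd estimate rather than by sections of $m(K+D)$; your Noetherian-induction sketch for part (b) likewise has no counterpart in the paper, which instead concludes algebraic degeneracy directly from the moving-target SMT applied to an auxiliary ramification divisor.
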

Here, for positive integer $n$, $N_f^{(n)}(D,r)$ is the $n$-truncated counting function with respect to $D$ is given by
\begin{align}
N_f^{(n)}(D,r)=\sum_{0<|z|<r}\min\{{\rm ord}_z f^*D, n\}\log\frac{r}{|z|}+\min\{{\rm ord}_0 f^*D, n\}\log r,
\end{align}
$T_{D,f}(r)$ is the (Nenvanlinna) height function relative to the divisor $ D$ (See \cite[Section 12]{vojta2009diophantine}.), and the notion $\leq_{\exc}$  means that the estimate holds for all $r$ outside a set of finite Lebesgue measure.
Unlike the situation in number fields, there are some known results for this conjecture.  For example, the conjecture holds for $\dim X=1$.  The Cartan's second main theorem, where $X=\mathbb P^n$ and $D=H_1+\cdots+H_q$, where the $H_i$ are hyperplanes in general position, suggests that \eqref{truncate1} holds with 
$N_f^{(1)}(D,r)$ replaced by 
 $\sum_{i=1}^qN_f^{(n)}(H_i,r)$  under a weaker assumption that the map $f$ is linearly non-degenerate. When $X$ is a semiabelian variety, Noguchi, Winkleman and Yamanoi in \cite{noguchi2008semiabelian} showed that the inequality \eqref{truncate1} holds with $N_f^{(1)}(D,r)$ replaced by 
 $N_f^{(k_0)}(D,r)$ for some positive integer $k_0$, and \eqref{truncate2} holds if the map is algebraically nondegenerate.
 
The above conjecture is much harder for the case of moving targets, i.e. the divisor $D$ is defined over a field of ``small functions" with respect to the map $f$.  
The only existing result   in the moving case is due to Yamanoi in \cite{Yamanoi2004} for   $\dim X=1$.  All the other results in this direction are stated with a very high truncated level.(See Theorem \ref{SMTmoving}, which is a result of Dethloff and Tan, for an example.)   Our first result is to establish the inequality \eqref{truncate2} for complex tori with moving targets.
We recall that the small field with respect to  a holomorphic curve   $\mathbf{f}:\CC\to\PP^n(\CC)$  is given by
\begin{equation}\label{smallfield}
    K_{\mathbf{f}}:=\{a:a \text{ is a meromorphic function with } T_{a}(r)= {\rm o}( T_{\mathbf{f}}(r))\}.
\end{equation} 

\begin{theorem}\label{main_thm_1}
	Let $u_0,u_1,\dots,u_n$ be  nonconstant entire functions without zeros, i.e. $\mathbf{u}=(u_{0},\ldots,u_{n}):\mathbb C\to \mathbb G_m^{n+1}$. Let $K_{\mathbf{u}}$ be the small field with respect to $\mathbf{u}$. Let $G$ be a nonconstant  homogeneous polynomials in $K_{\mathbf{u}}[x_0,\hdots,x_n]$ with no repeated  nonmonomial factors in $K_{\mathbf{u}}[x_0,\hdots,x_n]$. If $u_0,\dots,u_n$ are multiplicatively independent modulo $K_{\mathbf{u}}$, then for any $\epsilon >0$	
\begin{align}\label{truncate_thm_1}
 N_{G(\mathbf{u})}(0,r)-N^{(1)}_{G(\mathbf{u})}(0,r)\le_{\exc} \epsilon T_{\mathbf{u}}(r).
\end{align}
	
If we assume furthermore that  none of  the functions $G(1,0,\dots, 0),\dots,$ $G(0,\dots,0,1)$  is identically zero, then
	\begin{align}\label{truncate_thm_2}
  N^{(1)}_{G(\mathbf{u})}(0,r)\ge_{\rm exc}  (\deg  G-\epsilon)\cdot T_{\mathbf{u}}(r). 
  	\end{align}
Moreover, if each $u_j$, $0\le j\le n$, is of finite order and $G_i\in \mathbb C[z][x_0,\hdots,x_n]$, $1\le i\le q$, then  the both assertions above hold  under the weaker assumption that
$u_1,\hdots,u_n$ are multiplicatively independent modulo $\mathbb C$.
\end{theorem}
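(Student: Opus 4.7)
The approach is to combine a moving-target second main theorem of Dethloff--Tan type (Theorem \ref{SMTmoving}), which provides truncation at some high level $k_0$, with a logarithmic derivative argument that exploits the zero-free nature of $u_0,\ldots,u_n$ to sharpen the truncation down to level~$1$.

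I would begin by factoring $G = M \cdot G_1 \cdots G_q$ in $K_{\mathbf{u}}[x_0,\ldots,x_n]$, where $M$ collects all monomial factors and $G_1,\ldots,G_q$ are the distinct irreducible non-monomial factors guaranteed by the hypothesis. Since each $u_j$ is entire without zeros, $M(\mathbf{u})$ is also zero-free and contributes nothing to $N_{G(\mathbf{u})}(0,r)$, so one may work with $G_1 \cdots G_q$ in place of $G$. Pairwise common zeros of $G_i(\mathbf{u})$ and $G_j(\mathbf{u})$ for $i \neq j$ lie inside the zero set of the nonzero small function $\operatorname{Res}(G_i,G_j) \in K_{\mathbf{u}}$, and so contribute at most $o(T_{\mathbf{u}}(r))$. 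Thus both inequalities reduce to controlling $N_{G_i(\mathbf{u})}(0,r) - N^{(1)}_{G_i(\mathbf{u})}(0,r)$ and (for the second) bounding $\sum_i N^{(1)}_{G_i(\mathbf{u})}(0,r)$ from below, for each irreducible non-monomial $G_i$.

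To such a $G_i$ I would apply the moving-target SMT for $\mathbf{u} : \mathbb{C} \to \mathbb{P}^n$: the hypothesis of multiplicative independence modulo $K_{\mathbf{u}}$ supplies the algebraic non-degeneracy required, and the extra hypothesis $G(e_j) \not\equiv 0$ in the second assertion ensures that the divisor $\{G=0\}$ is in general position with the coordinate hyperplanes of the torus. This produces
\begin{equation*}
\sum_i N^{(k_0)}_{G_i(\mathbf{u})}(0,r) \ge_{\exc} (\deg G - \epsilon)\, T_{\mathbf{u}}(r)
\end{equation*}
for some integer $k_0 = k_0(n,\deg G)$. Combined with the first main theorem $N_{G(\mathbf{u})}(0,r) \le \deg G \cdot T_{\mathbf{u}}(r) + o(T_{\mathbf{u}}(r))$, this already gives weak versions of \eqref{truncate_thm_1} and \eqref{truncate_thm_2} with $N^{(1)}$ replaced by $N^{(k_0)}$.

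The main obstacle is closing the gap from truncation $k_0$ down to $1$; this is where the zero-free property of each $u_j$ is essential. Writing $u_j = e^{g_j}$, the logarithmic derivatives $u_j'/u_j = g_j'$ are entire and satisfy $m(r,u_j'/u_j) = o(T_{\mathbf{u}}(r))$. If $G_i(\mathbf{u})$ vanishes at $z_0$ to order $m \ge 2$, iterated differentiation yields polynomials $D^{(k)}G_i \in K_{\mathbf{u}}[x_0,\ldots,x_n]$ of the same degree as $G_i$, whose coefficients are polynomial expressions in the $u_j'/u_j$, their higher log-derivatives, and derivatives of the $a_\alpha$, each vanishing on $\mathbf{u}(z_0)$ to order at least $m-k$. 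The excess multiplicity is therefore captured by a Wronskian-type determinant $W$ built from $G_i(\mathbf{u}),(D^{(1)}G_i)(\mathbf{u}),\ldots$ whose characteristic satisfies $T_W(r) \le \deg G \cdot T_{\mathbf{u}}(r) + o(T_{\mathbf{u}}(r))$ by the log-derivative lemma applied with moving coefficients. Hence $N_{G_i(\mathbf{u})}(0,r) - N^{(1)}_{G_i(\mathbf{u})}(0,r) \le N_W(0,r) + o(T_{\mathbf{u}}(r)) \le_{\exc} \epsilon\, T_{\mathbf{u}}(r)$, which yields \eqref{truncate_thm_1} and, combined with the previous step, \eqref{truncate_thm_2}. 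For the finite-order case, Hadamard factorisation gives $u_j = e^{g_j}$ with $g_j$ a polynomial; a relation $\prod u_j^{e_j} = a \in K_{\mathbf{u}}$ then forces $\sum e_j g_j$ to equal $\log a$ up to a $2\pi i \mathbb{Z}$-shift, but $T_a(r) = o(T_{\mathbf{u}}(r))$ forces the polynomial $\sum e_j g_j$ to be constant, so $\prod u_j^{e_j} \in \mathbb{C}$. Thus multiplicative independence of $u_1,\ldots,u_n$ modulo $\mathbb{C}$ coincides with independence modulo $K_{\mathbf{u}}$ in this setting, and the earlier arguments apply verbatim.
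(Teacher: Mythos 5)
Your plan correctly identifies the Dethloff--Tan moving-target SMT as the source of the high-truncation lower bound, and your reduction via the first main theorem to the problem of bounding $N_{G(\mathbf{u})}(0,r)-N^{(1)}_{G(\mathbf{u})}(0,r)$ is sound. But the crucial step --- passing from truncation $k_0$ to truncation $1$ --- has a genuine gap. You introduce a Wronskian-type determinant $W$, argue that $T_W(r)\le \deg G\cdot T_{\mathbf{u}}(r)+o(T_{\mathbf{u}}(r))$ via the log-derivative lemma, and then assert $N_W(0,r)\le_{\exc}\epsilon\, T_{\mathbf{u}}(r)$. The first statement does not imply the second: bounding a characteristic function only gives $N_W(0,r)\le \deg G\cdot T_{\mathbf{u}}(r)+o(T_{\mathbf{u}}(r))$, which is useless here. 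A Wronskian argument in the Cartan style can at best deliver truncation to a level comparable to $n$ or to $\deg G$, not to $1$ with an $\epsilon$ error. What makes the paper's proof work is a much stronger and independent input: the GCD theorem of Levin--Wang (Theorem \ref{gcd_moving_unit}). The paper forms the single ``twisted derivative'' $D_{\mathbf{u}}(G)$, observes that $G(\mathbf{u})'=D_{\mathbf{u}}(G)(\mathbf{u})$, so that the excess multiplicity is exactly the gcd counting function $N_{\gcd}(G(\mathbf{u}),D_{\mathbf{u}}(G)(\mathbf{u}),r)$, shows by Lemma \ref{coprime} that $G$ and $D_{\mathbf{u}}(G)$ are coprime polynomials, and then invokes the GCD theorem --- a deep Schmidt-subspace-type statement on algebraic tori --- to bound this gcd by $\epsilon\, T_{\mathbf{u}}(r)$. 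Your proposal never invokes any result of this strength, and no amount of log-derivative bookkeeping can substitute for it.

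There is a second, smaller error in the finite-order discussion. You claim that, for units of finite order, multiplicative independence modulo $\mathbb{C}$ coincides with multiplicative independence modulo $K_{\mathbf{u}}$. This is false. For instance, $u_0=e^{z^2}$ and $u_1=e^{z+z^2}$ are multiplicatively independent modulo $\mathbb{C}$, but $u_1u_0^{-1}=e^{z}$ has order $1<2$, so $T_{e^z}(r)=o(T_{\mathbf{u}}(r))$, i.e.\ $e^z\in K_{\mathbf{u}}$; hence $u_0,u_1$ are multiplicatively \emph{dependent} modulo $K_{\mathbf{u}}$. The paper handles the finite-order case differently: it does not attempt to upgrade the hypothesis to independence modulo $K_{\mathbf{u}}$, but rather uses Proposition \ref{Borel3} to deduce algebraic independence over $\mathbb{C}(z)$ and then applies Theorem \ref{formain_thm1} with the smaller base field $K=\mathbb{C}(z)$, where $u_j'/u_j\in\mathbb{C}(z)$ because each $u_j=e^{P_j}$ with $P_j$ a polynomial.
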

Here,  $T_{\mathbf{u}}(r)$ is the Nevanlinna height function associated to ${\mathbf{u}}$ and $N_{G(\mathbf{u})}(0,r)$ ($N^{(1)}_{G(\mathbf{u})}(0,r)$ respectively) is the counting function associated to 0 and $G(\mathbf{u})$ (with truncation to level $1$  respectively) to be defined in the next session.  
The first application of Theorem \ref{main_thm_1} concerns  a case of the
 Green-Griffith-Lang conjecture for projective spaces in the context of moving targets of slow growth. 
 The Green-Griffiths-Lang conjecture in the non-compact case (see \cite[Proposition~15.3]{vojta2009diophantine}) 
 reads as follows: 
 {\it If $X$ is  a smooth variety, $D$ is a normal crossing divisor on $X$, and $X\setminus D$ is a variety of log general type, then a holomorphic map $f:\mathbb C\to X\setminus D$ cannot have Zariski-dense image.}  When $X=\mathbb P^n$, the condition for $X\setminus D$ to be of log general type is equivalent to the inequality $\deg D\ge n+2$.   In this setting, the conjecture is verified by Green  \cite{green1975some} when $D$ has at least $n+2$ components, and when $n=2$ of 3 components with $\deg D=4$ in \cite{green1974functional} under the assumption that $f$ is of finite order.
 The $n+1$ component case (with $\deg D\ge n+2$) is solved by  Noguchi, Winkelman and Yamanoi  in \cite[Theorem 5.4]{noguchi2007degeneracy}.  For the moving target situation, the $n+2$ component case follows directly from the second main theorem for moving hypersurfaces in \cite{DethloffTan2011}; the boundary case of $n+1$ components, i.e. $\deg D=n+2$, is recently established in \cite{GSW20}.  The following theorem gives a full treatment to the case of $n+1$ components.

\begin{theorem}\label{GG_conj}
Let $\mathbf{f}=(f_0,\hdots,f_n):\CC\to \PP^n$ be a holomorphic map, where $f_0,\hdots,f_n$ are entire functions without common zeros.    Let 
$F_i$, $1\le i\le n+1$ be homogeneous irreducible polynomials of positive degree  in $K_{\mathbf{f}}[x_0,\hdots,x_n]$ such that $\sum_{i=1}^{n+1}\deg F_i\ge n+2$. 
Assume that there exists $z_0\in \CC$ such that all the coefficients of  all $F_i$, $1\le i\le n+1$ are  holomorphic at $z_0$  and the zero locus of  $F_i$,  $1\le i\le n+1$, evaluating at $z_0$  intersect transversally.  If   $F_i(\mathbf{f})$, $1\le i\le n+1$, are entire function without zeros, then $\mathbf{f}$ is algebraically degenerate over $K_\mathbf{f}$. 
 Moreover, if $\mathbf{f}$ is of finite order and $F_i\in \mathbb C [x_0,\hdots,x_n]$ for each $i$, then $\mathbf{f}$ is algebraically degenerate over $\mathbb C$. 
\end{theorem}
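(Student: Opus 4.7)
Argue by contradiction: suppose $\mathbf{f}$ is algebraically nondegenerate over $K_{\mathbf{f}}$. Set $u_i:=F_{i+1}(\mathbf{f})$ for $i=0,\ldots,n$; since each $F_i(\mathbf{f})$ is entire and nowhere zero, this yields a holomorphic map $\mathbf{u}\colon\mathbb{C}\to\mathbb{G}_m^{n+1}$, and $T_{u_i}(r)\le(\deg F_{i+1})T_{\mathbf{f}}(r)+o(T_{\mathbf{f}}(r))$ gives $K_{\mathbf{u}}\subseteq K_{\mathbf{f}}$. The strategy is to apply Theorem \ref{main_thm_1} to $\mathbf{u}$ with a suitably chosen homogeneous $G$ and contradict the nowhere-vanishing of the $F_i(\mathbf{f})$.

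\textbf{Step 1 (multiplicative independence).} Verify that $u_0,\ldots,u_n$ are multiplicatively independent modulo $K_{\mathbf{u}}$. If not, pick $\mathbf{a}\in\mathbb{Z}^{n+1}\setminus\{0\}$ and $c\in K_{\mathbf{u}}^\times$ with $\prod_i u_i^{a_i}=c$; splitting into positive and negative exponents, this reads $H_+(\mathbf{f})=cH_-(\mathbf{f})$, with $H_\pm\in K_{\mathbf{f}}[x_0,\ldots,x_n]$ homogeneous of degrees $d_\pm:=\sum_i a_i^\pm\deg F_{i+1}$. If $d_+=d_-$, then $H_+-cH_-$ is a nonzero homogeneous element of $K_{\mathbf{f}}[x_0,\ldots,x_n]$ (by unique factorization, using that the $F_i$ are distinct irreducibles---forced by the transversality at $z_0$) vanishing at $\mathbf{f}$, contradicting algebraic nondegeneracy. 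If $d_+\ne d_-$, first reduce to the balanced case: if the lattice of multiplicative relations has rank $\ge 2$, a $\mathbb{Z}$-linear combination of two independent unbalanced relations produces a balanced one; otherwise the image of $\mathbf{u}$ is constrained to a moving codimension-one subvariety of $\mathbb{G}_m^{n+1}$, and the restriction gives a lower-dimensional instance of the theorem on which to induct.

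\textbf{Step 2 (applying Theorem \ref{main_thm_1}).} With multiplicative independence established, choose a homogeneous $G\in K_{\mathbf{u}}[y_0,\ldots,y_n]$ having no repeated nonmonomial factor and satisfying $G(e_j)\ne 0$ for every $j$. Theorem \ref{main_thm_1} then gives
\begin{equation*}
N^{(1)}_{G(\mathbf{u})}(0,r)\ge_{\exc}(\deg G-\epsilon)T_{\mathbf{u}}(r).
\end{equation*}
The principal obstacle is selecting $G$ so that a first-main-theorem upper bound on $N^{(1)}_{G(\mathbf{u})}(0,r)$---available from the nowhere-vanishing of each $F_i(\mathbf{f})$---strictly beats this lower bound. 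I anticipate that the right choice is a product of $\sum_i\deg F_i$ linear forms arranged so that $G(\mathbf{u})$ factors through the nonvanishing $F_i(\mathbf{f})$ (leveraging the transversality at $z_0$), yielding only a controlled number of truncated zeros; the hypothesis $\sum_i\deg F_i\ge n+2$ is the ingredient that makes this count strictly exceed the lower bound. The \emph{moreover} assertion for finite-order $\mathbf{f}$ with constant-coefficient $F_i$ follows the same outline with $K_{\mathbf{f}}$ replaced by $\mathbb{C}$, invoking the final strengthening of Theorem \ref{main_thm_1} under which only multiplicative independence modulo $\mathbb{C}$ is required.
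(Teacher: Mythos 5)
The decisive gap is in Step 2. The second assertion of Theorem \ref{main_thm_1} is a \emph{lower} bound $N^{(1)}_{G(\mathbf{u})}(0,r)\ge_{\exc}(\deg G-\epsilon)T_{\mathbf{u}}(r)$, so to reach a contradiction you must exhibit a non-monomial $G$ with $G(e_j)\ne0$ for which $N^{(1)}_{G(\mathbf{u})}(0,r)$ is genuinely deficient. But the only vanishing information available is that the coordinates $u_i=F_{i+1}(\mathbf{f})$ are themselves zero-free; for any non-monomial $G$ the First Main Theorem only gives $N^{(1)}_{G(\mathbf{u})}(0,r)\le\deg G\cdot T_{\mathbf{u}}(r)+{\rm o}(T_{\mathbf{u}}(r))$, which is perfectly consistent with the lower bound, and a product of linear forms in the $u_i$ cannot ``factor through the nonvanishing $F_i(\mathbf{f})$'' unless $G$ is a monomial --- which is excluded by the very hypotheses under which the lower bound holds. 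So the anticipated choice of $G$ does not exist, and $\sum_i\deg F_i\ge n+2$ does not enter as a counting inequality in the way you expect. The paper's proof needs two ingredients absent from your outline: (i) the Jacobian determinant of the finite map $\pi=[F_1^{a_1}:\cdots:F_{n+1}^{a_{n+1}}]$ (the hypothesis $\sum_i\deg F_i\ge n+2$ is used only to make this determinant nonconstant), an irreducible factor $\tilde G$ of which is shown --- via the transversality at $z_0$ and an Euler-formula computation --- to be in weakly general position with $F_1,\dots,F_{n+1}$; and (ii) the Dethloff--Tan moving-target second main theorem (Theorem \ref{SMTmoving}) applied to the $n+2$ hypersurfaces $\tilde G,F_1,\dots,F_{n+1}$, which yields the contradiction once one knows $N_{\tilde G(\mathbf{f})}(0,r)\le_{\exc}\epsilon T_{\mathbf{f}}(r)$. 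That smallness is where the torus result is actually used, and only its \emph{first} assertion (the bound on $N-N^{(1)}$): since $\tilde G$ lies in the ramification locus of $\pi$, one writes $\pi^*\tilde A=\tilde G^2H$ for a suitable polynomial $\tilde A$ on the target torus, so zeros of $\tilde G(\mathbf{f})$ are multiple zeros of $\tilde A(\mathbf{u})$ and are absorbed by $N_{\tilde A(\mathbf{u})}(0,r)-N^{(1)}_{\tilde A(\mathbf{u})}(0,r)\le_{\exc}\epsilon T_{\mathbf{u}}(r)$.

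A secondary but real problem is Step 1: a multiplicative relation with $d_+\ne d_-$ gives $H_+(\mathbf{f})=cH_-(\mathbf{f})$ with $H_\pm$ homogeneous of different degrees, which is not a homogeneous relation and hence does not contradict projective nondegeneracy; the rank argument and the proposed ``induction on a lower-dimensional instance'' do not repair this, since the restricted situation is not an instance of the theorem being proved. The paper avoids the issue by normalizing degrees ($u_i=F_i(\mathbf{f})^{a_i}$ with $a_i=\operatorname{lcm}(\deg F_1,\dots,\deg F_{n+1})/\deg F_i$) and by deriving the algebraic independence needed for Theorem \ref{formain_thm1} from the finiteness of $\pi$ together with the nondegeneracy of $\mathbf{f}$, rather than arguing through multiplicative independence.
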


The second application of Theorem \ref{main_thm_1} is related to an algebraic problem of exponential polynomials.
We recall  the definition of  exponential polynomials of order $q\in\mathbb N$ as follows.
\begin{definition}
An  {\it   exponential polynomial of order $q$}  is an entire function  of the form 
\begin{align}\label{exppoly}
f(z)=P_1(z)e^{Q_1(z)}+\cdots+P_m(z)e^{Q_m(z)},
\end{align} 
where $m\in\mathbb N$, $P_i,Q_i\in\CC[z]$, $1\le i\le m$, such that  
$$
\max_{1\le i\le m}\{\deg Q_i\}=q.
$$
\end{definition}
Our interest of studying exponential polynomials comes from its correspondence with linear recurrences and its algebraic structure. 
In  \cite{GSW20}, we showed that if an exponential polynomial $f$ is a $d$-th power of some entire function $g$, i.e. $f=g^d$, then $g$ is also an exponential polynomial.
In view of the correspondence between Diophatine approximation  and Nevanlinna theory, this result is a
  complex analogue of  Pisot's $d$-th root conjecture for linear recurrences, which is  completely solved in  \cite{zannier2000proof}.
From the view points of studying the algebraic structure of the exponential polynomials,  it basically says that the radical of an exponential polynomial is also an exponential polynomial.  We refer to  \cite{heittokangas2018zero},  \cite{guo2019quotient}, and \cite{guo2019quotient2}  for other expositions in this direction; and \cite{CGPastenLogic} for related results in logic.  The following is a  nature question  on the  integrality of the ring of exponential polynomials over the ring of entire functions.

\begin{problem*} 
Let $q$ and $d$   be  positive integers.  Denote by $\mathcal K_q$ the quotient field of the ring $\mathcal E_{q}$ of the exponential polynomials of order at most $q$.
Let  $A_0,\hdots,A_{d-1} \in\mathcal E_{q} $ such that    the polynomial $F(Y):=Y^d+A_{d-1}Y^{d-1}+\hdots+A_1Y+A_0$ is irreducible over $\mathcal K_q[Y]$.  Suppose that there exists an entire function $g$ such that 
$F(g)=0$. Is it true that $g\in \mathcal E_{q}$? 
\end{problem*}

For the case $q=1$ and each $A_i$ is an exponential polynomial with constant coefficients, refer to \cite{ritt1929algebraic} for a complete solution. 

Being not able to solve the problem completely, we prove the following generic result. 
  
\begin{theorem}\label{mainthm3}
Let $d$ and $q_i$, $1\le i\le n$ be  positive integers and let $q=\max\{q_1,\hdots,q_n\}$.  Let $\mu_1,\hdots,\mu_n\in\mathbb C^*$  and
$\mathbf u=(u_1,\hdots,u_n)=(e^{\mu_1z^{q_1}}, \hdots, e^{\mu_nz^{q_n}})$.  Assume that $u_1,\hdots,u_n$ are multiplicatively independent modulo $\mathbb C$. Let $A_{i}\in  \mathbb{C}(z)[u_{1}^{\pm 1},\hdots,u_{n}^{\pm 1}] $,
$0\le i\le d-1$. Assume that the polynomial  $F(Y):=Y^{d}+A_{d-1}Y^{d-1}+\hdots+A_{1}Y+A_{0}\in \mathcal K_q[Y]$ 
is irreducible  and   its discriminant $\Delta$ is square-free   in  $\mathbb{C}(z)[u_{1}^{\pm 1},\hdots,u_{n}^{\pm 1}]$,  i.e. $\Delta$  has no non-unit  repeated factor.
Suppose that  there exists an entire function $g$ such that 
$F(g)=0$. Then $g\in \mathcal E_{q}$.  
\end{theorem}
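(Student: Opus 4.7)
My plan is to apply Theorem~\ref{main_thm_1} to the homogenized discriminant of $F$, combine the resulting truncated counting estimate with a branching analysis of the algebraic function cut out by $F$, and derive a contradiction whenever $d\ge 2$; the $d=1$ case then yields $g\in\mathcal E_q$ by a direct pole-cancellation argument.

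After clearing the $z$-denominators of $A_0,\hdots,A_{d-1}$ and multiplying through by a Laurent monomial in $u_1,\hdots,u_n$, I replace $F$ by an equivalent polynomial $\tilde F\in\mathbb C[z][u_1^{\pm 1},\hdots,u_n^{\pm 1}][Y]$ with the same entire root $g$. Its discriminant $\tilde\Delta$ differs from $\Delta$ only by a factor in $\mathbb C(z)[u_1^{\pm 1},\hdots,u_n^{\pm 1}]^\times$ and hence remains squarefree there. Homogenizing $\tilde\Delta$ in the $u_i$ produces a homogeneous polynomial $G\in\mathbb C[z][x_0,\hdots,x_n]$ with no repeated non-monomial factor. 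Since each $u_j=e^{\mu_jz^{q_j}}$ has finite order and $G$ has $\mathbb C[z]$-coefficients, the ``moreover'' clause of Theorem~\ref{main_thm_1} applies under the assumed multiplicative independence of $u_1,\hdots,u_n$ modulo $\mathbb C$, giving for every $\epsilon>0$
\[
N_{\Delta(\mathbf u)}(0,r)-N^{(1)}_{\Delta(\mathbf u)}(0,r)\le_{\exc}\epsilon\,T_{\mathbf u}(r)
\quad\text{and}\quad
N^{(1)}_{\Delta(\mathbf u)}(0,r)\ge_{\exc}(\deg G-\epsilon)\,T_{\mathbf u}(r).
\]

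Suppose for contradiction that $d\ge 2$ and write $F(Y)=\prod_{i=1}^d(Y-g_i)$ in the splitting field with $g_1=g$. At each simple zero $z_0$ of $\Delta(\mathbf u)$, squarefreeness of $\tilde\Delta$ forces exactly one pair among the $g_i(z_0)$ to coincide while all other roots are simple, and the local monodromy of the algebraic function around $z_0$ is a single transposition. Because $g$ is entire and therefore single-valued, this transposition must fix $g$, so $g(z_0)$ is a simple root of $F(\cdot\,;z_0)$ and
$F'(g)(z_0)=\prod_{i\ge 2}(g-g_i)(z_0)\ne 0$. Every zero of $F'(g)$ must then lie at a non-simple zero of $\Delta(\mathbf u)$, yielding
\[
N_{F'(g)}(0,r)\;\le\;N_{\Delta(\mathbf u)}(0,r)-N^{(1)}_{\Delta(\mathbf u)}(0,r)\;\le_{\exc}\;\epsilon\,T_{\mathbf u}(r).
\]
The resultant identity $\prod_{i=1}^d F'(g_i)=\pm\Delta$ implies that the symmetric product $P:=\prod_{i\ge 2}F'(g_i)$ is a meromorphic function on $\mathbb C$ satisfying $F'(g)\cdot P=\pm\Delta(\mathbf u)$, so $P$ has essentially all the zeros of $\Delta(\mathbf u)$ and $N_P(0,r)\ge_{\exc}(\deg G-2\epsilon)T_{\mathbf u}(r)$. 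Combining this asymmetry between $F'(g)$ and $P$ with the irreducibility of $F$ over $\mathcal K_q$ --- which constrains how $P$ can be built from symmetric functions of the $g_i$ and, via the Galois action on the splitting field, couples the characteristic of $P$ to that of $F'(g)$ --- produces the desired contradiction for $d\ge 2$.

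Therefore $d=1$, so $F(Y)=Y+A_0$ and $g=-A_0\in\mathbb C(z)[u_1^{\pm 1},\hdots,u_n^{\pm 1}]$. Writing $A_0$ as a Laurent polynomial in $u_1,\hdots,u_n$ with coefficients in $\mathbb C(z)$, the multiplicative independence of $u_1,\hdots,u_n$ modulo $\mathbb C$ lets me separate the monomial terms; using that $g$ is entire I argue that each $\mathbb C(z)$-coefficient is in fact in $\mathbb C[z]$, so $g\in\mathbb C[z][u_1^{\pm 1},\hdots,u_n^{\pm 1}]\subset\mathcal E_q$. The principal obstacle is the final step of the $d\ge 2$ argument: promoting the scarcity bound on $N_{F'(g)}(0,r)$ and the lower bound on $N_P(0,r)$ into an \emph{algebraic} contradiction with irreducibility, rather than a merely Nevanlinna-theoretic one. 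This requires careful Galois-theoretic bookkeeping of how the symmetric product $P$ decomposes under the action on the conjugates of $g$, and it is here that the squarefreeness of $\Delta$ is used most delicately --- both to apply Theorem~\ref{main_thm_1} to $G$ without spurious multiplicities and to guarantee that simple zeros of $\Delta(\mathbf u)$ genuinely correspond to simple transpositions of the local monodromy.
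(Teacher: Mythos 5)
The core of your plan---derive a contradiction for $d\ge 2$ from the scarcity of zeros of $F'(g)$ versus the abundance of zeros of $\Delta(\mathbf u)$---contains a genuine gap at the decisive step, and you say so yourself: you explicitly flag ``promoting the scarcity bound on $N_{F'(g)}(0,r)$ \ldots\ into an \emph{algebraic} contradiction with irreducibility'' as the ``principal obstacle,'' and the ``Galois-theoretic bookkeeping'' that is supposed to produce the contradiction is never specified. As it stands, the two estimates you derive (that $F'(g)$ has few zeros and that $P:=\Delta(\mathbf u)/F'(g)$ carries almost all the zeros of $\Delta(\mathbf u)$) are mutually consistent and do not by themselves contradict irreducibility; the imbalance between $F'(g)$ and $P$ needs to be converted into structural information about $F$, and no mechanism for that conversion is given.

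The paper's proof takes an essentially different route that avoids this impasse. Rather than seeking an outright contradiction, Lemma~\ref{mainlemma} feeds the smallness of $N_{F'(g)}(0,r)$ (obtained via Lemma~\ref{dth_power_count}, which is the gcd-based analogue of what you are doing, not via the full Theorem~\ref{main_thm_1}) into the truncated second main theorem, Theorem~\ref{trunborel}, applied to an algebraic relation $R(u_1,\hdots,u_n,F'(g))=0$. This forces the monomials occurring in $R$ to be collinear in the ``top-order'' variables, from which one deduces that $\Delta$ factors as a single monomial in $u_1,\hdots,u_\ell$ (the variables of order $q$) times a Laurent polynomial in the remaining $u_i$. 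This is a positive structural statement, not a contradiction. The algebraic separation Lemma~\ref{separate} (adapted from Corvaja--Zannier) then rewrites $F(Y)=u_1^{s}P(u_1^{t}Y-c)$ with $P$ involving one fewer variable, and induction on $n$ finishes; the base case $n=0$ uses Theorem~\ref{rational} to place $g$ in $\CC[z]$. Your proposal has neither the $\log$-derivative/Borel step that extracts this monomial structure nor the variable-elimination step that makes induction possible.

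Two subsidiary remarks. First, even the part of your argument you do carry out uses more than is available: the lower bound $N^{(1)}_{\Delta(\mathbf u)}(0,r)\ge_{\exc}(\deg G-\epsilon)T_{\mathbf u}(r)$ from Theorem~\ref{main_thm_1} requires that none of $G(1,0,\dots,0),\dots,G(0,\dots,0,1)$ vanish identically, and there is no reason the homogenized discriminant $G$ of $\tilde\Delta$ satisfies this; the paper side-steps this entirely because its argument only needs the gcd upper bound. Second, the bound $N_{F'(g)}(0,r)\le N_{\Delta(\mathbf u)}(0,r)-N^{(1)}_{\Delta(\mathbf u)}(0,r)$ you state is not automatic from your local monodromy picture; it relies on an inequality such as $\operatorname{ord}_z F'(g)\le\operatorname{ord}_z\Delta(\mathbf u)-1$ at multiple zeros, which in the paper's setup follows from $\Delta=F'(g)^2 G$ with $G$ entire (after clearing unit factors) rather than from the transposition argument alone.
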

Let us explain how this theorem is related to the problem above.  Let $A_0,\hdots,A_{d-1}\in \mathcal E_{q}$ such that at least one of the $A_i$ is of order $q$.  Then we can find   units of finite order $u_1,\hdots,u_n$, which are multiplicative independent modulo $\mathbb C$,  equivalently algebraic independent over $\CC(z)$, as in Theorem \ref{mainthm3}  such that each $A_i\in \mathbb C[z][u_{1}^{\pm 1},\hdots,u_{n}^{\pm 1}]\subset \mathbb C(z)[u_{1}^{\pm 1},\hdots,u_{n}^{\pm 1}]$. Note that the latter ring is a unique factorization domain.  We refer to the proof of Theorem 1.3 in \cite{GSW20} for such constructions. 
Theorem \ref{mainthm3} then applies to  the problem under this identification.

Our proof of Theorem \ref{main_thm_1} relies on the GCD theorem \cite{levin2019greatest} of Levin and the third author  and the machinery developed in  \cite{GSW20}.   The proof of  Theorem \ref{GG_conj}  follows the ideas in \cite{noguchi2007degeneracy} and \cite{corvaja2013algebraic} with extension to the moving situation. We note that Capuano and Turchet in \cite{CT} generalized the work of \cite{corvaja2013algebraic} to non-split function fields (i.e. the moving case in our terminology) for surfaces.  Finally, the proof of Theorem \ref{mainthm3} is an adaption and generalization of Theorem 3 in \cite{corvaja2013algebraic}  to the complex situation.

Some background materials will be given in the next session.  The key lemmas are stated in Session \ref{main_lem}.
The proofs of our theorems will be given in Section  \ref{thm1}-\ref{thm3} respectively.
   
  \section{Preliminaries}\label{preliminaries}
We will give relevant materials and derive some basic results in this session.

\subsection{Nevanlinna Theory}
We will set up some notation and definitions  in Nevanlinna theory and recall some basic results.   We refer to \cite{lang1987introduction} and \cite{ru2001nevanlinna} for details.

Let $f$ be a meromorphic function  and   $z\in \CC$ be a complex number. Denote $v_z(f):=\ord_z(f)$,
$$v_z^+ (f):=\max\{0,v_z(f)\}, \quad\text{and }\quad  v_z^- (f):=-\min\{0,v_z(f)\}.$$
Let $n_f(\infty,r)$ (respectively,  ${n}^{(Q)}_{f}(\infty,r)$) denote the number of poles of $f$ in $\{z:|z|\le r\}$, counting multiplicity (respectively, ignoring multiplicity larger than $Q\in\mathbb N$). The  {\it counting function} and {\it truncated counting function} of $f$ of order $Q$ at $\infty$  are  defined respectively by
\begin{align*}
N_f(\infty,r)&:=\int_0^r\frac{n_f(\infty,t)-n_f(\infty,0)}t dt+n_f(\infty,0)\log r\\
&=\sum_{0<|z|\le r } v_z^- (f)\log |\frac{r}{z}|+v_0^- (f)\log r,
\end{align*}
and
\begin{align*}
N^{(Q)}_f(\infty,r)&:=\int_0^r\frac{n^{(Q)}_f(\infty,t)-n^{(Q)}_f(\infty,0)}t dt+n^{(Q)}_f(\infty,0)\log r\\
&=\sum_{0<|z|\le r } \min\{Q,v_z^- (f)\}\log |\frac{r}{z}|+\min\{Q,v_0^- (f)\}\log r.
\end{align*} 
Then define the {\it counting function} $N_f(r,a)$ and the {\it truncated counting function} $N^{(Q)}_f(r,a)$ for $a\in\CC$ as
$$
N_f(a,r):=N_{1/(f-a)}(r, \infty)\quad\text{and}\quad N^{(Q)}_f(a,r):=N^{(Q)}_{1/(f-a)}(\infty,r).
$$
The  {\it proximity function} $m_f(\infty,r)$ is defined by
$$
m_f(\infty,r):=\int_0^{2\pi}\log^+|f(re^{i\theta})|\frac{d\theta}{2\pi},
$$
where $\log^+x=\max\{0,\log x\}$ for  $x\ge 0$. For any $a\in \CC,$ the {\it proximity function} $m_f(a,r)$ is defined by
$$m_f(a,r):=
m_{1/(f-a)}(\infty,r).
$$
The {\it characteristic function} is defined by
$$
T_f(r):=m_f(\infty,r)+N_f(\infty,r).
$$
It  satisfies the First Main Theorem as follows.
\begin{theorem}\label{Cfirstmain}  
  Let $f$ be a non-constant  meromorphic function on $\CC$.  Then for every $a\in\CC$ and for any positive real number $r$, $$m_f(a,r)+N_f(a,r)=T_f(r)+O(1).$$ 
  where $O(1)$ is independent of $r$.
\end{theorem}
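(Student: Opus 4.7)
The plan is to derive this from Jensen's formula via a short bookkeeping argument on proximity and counting functions. First I would record the ``swap'' identity: for any meromorphic function $g$ on $\mathbb C$, Jensen's formula gives
\[
\int_0^{2\pi}\log|g(re^{i\theta})|\frac{d\theta}{2\pi}=\log c_g + N_g(\infty,r)-N_{1/g}(\infty,r),
\]
where $c_g$ is the first nonzero Laurent coefficient of $g$ at $0$ (with a $\log r$ correction absorbed by the definitions when $g$ has a zero or pole at the origin). Writing $\log|g|=\log^+|g|-\log^+|1/g|$ and comparing with the definitions of $m_g(\infty,r)$ and $m_{1/g}(\infty,r)$, this rearranges to
\[
T_g(r)=T_{1/g}(r)+\log|c_g|,
\]
so in particular $T_g(r)-T_{1/g}(r)$ is a bounded function of $r$.

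Next I would apply this to $g=f-a$. By the definitions given in the paper,
\[
T_{1/(f-a)}(r)=m_{1/(f-a)}(\infty,r)+N_{1/(f-a)}(\infty,r)=m_f(a,r)+N_f(a,r),
\]
so the swap identity yields $T_{f-a}(r)=m_f(a,r)+N_f(a,r)+O(1)$. It therefore remains to identify $T_{f-a}(r)$ with $T_f(r)$ up to a bounded term. For the counting function this is immediate since $f$ and $f-a$ have exactly the same poles with the same multiplicities, so $N_{f-a}(\infty,r)=N_f(\infty,r)$. For the proximity function I would invoke the elementary inequalities
\[
\log^+|x+y|\le \log^+|x|+\log^+|y|+\log 2,\qquad \log^+|x|\le \log^+|x-a|+\log^+|a|+\log 2,
\]
applied pointwise on $|z|=r$ and integrated against $d\theta/(2\pi)$, giving $|m_{f-a}(\infty,r)-m_f(\infty,r)|\le \log^+|a|+\log 2$. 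Adding these two facts gives $T_{f-a}(r)=T_f(r)+O(1)$, and combining with the previous step yields the desired equality $m_f(a,r)+N_f(a,r)=T_f(r)+O(1)$.

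There is no real obstacle here beyond careful bookkeeping; the only mildly delicate point is the behavior at $z=0$, which is handled by the standard convention that $N_f(\infty,r)$ includes the term $v_0^-(f)\log r$ and that the constant $c_g$ in Jensen is defined as the leading Laurent coefficient at $0$ (equal to $f(0)-a$ when $f$ is holomorphic and nonvanishing at $a$ there). Everything else is the triangle inequality for $\log^+$ and the definitions recorded immediately above the theorem.
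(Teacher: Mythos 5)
The paper does not prove this statement: it is recalled as the classical First Main Theorem of Nevanlinna theory, with references to the standard texts of Lang and Ru, so there is no proof here to compare against. Your argument is the standard textbook derivation and is correct in substance: Jensen's formula gives $T_g(r)=T_{1/g}(r)+\log|c_g|$, you apply this to $g=f-a$, and you pass from $T_{f-a}$ to $T_f$ using equality of pole divisors together with the $\log^+$ triangle inequality, with the origin handled by the $v_0^{\pm}\log r$ terms built into the paper's definition of the counting functions. One transcription slip: as you have written Jensen's formula, the right-hand side should read $\log|c_g|+N_{1/g}(\infty,r)-N_g(\infty,r)$ (zeros minus poles), not $N_g(\infty,r)-N_{1/g}(\infty,r)$; with your sign the stated rearrangement would give $m_g-N_g=m_{1/g}-N_{1/g}+\log|c_g|$ rather than $T_g=T_{1/g}+\log|c_g|$. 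Since the identity you actually use downstream is the correct one, the proof goes through once that sign is fixed.
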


Rational functions over $\mathbb C$ can be characterized by characteristic functions as follows.  (See\cite[Chapter VI, Theorem 2.6]{lang1987introduction}.)
\begin{theorem}\label{rational}  
Let $f$ be a meromorphic function on $\mathbb C$.
Then $f$ is a rational function, i.e. $f\in\mathbb C(z)$, if and only if $T_f(r)=O(\log r)$ for $r\to\infty$.
\end{theorem}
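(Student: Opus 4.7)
The plan is to prove both directions separately, using the First Main Theorem (Theorem \ref{Cfirstmain}) as the main bridge between the growth of $T_{f}$ and the distribution of values of $f$.

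For the easy direction, suppose $f \in \mathbb{C}(z)$, say $f = p/q$ with $p,q \in \mathbb{C}[z]$ coprime. Writing $T_{p/q}(r) \le T_{p}(r) + T_{1/q}(r) + O(1) = T_{p}(r) + T_{q}(r) + O(1)$ via the First Main Theorem, it suffices to show $T_{P}(r) = O(\log r)$ for any polynomial $P$ of degree $d$. But such a $P$ has no poles, so $N_{P}(\infty,r) = 0$, and $\log^{+}|P(re^{i\theta})| \le d\log r + O(1)$ for $r \ge 1$, giving $m_{P}(\infty,r) \le d\log r + O(1)$. Hence $T_{P}(r) = O(\log r)$.

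For the hard direction, assume $T_{f}(r) = O(\log r)$. By Theorem \ref{Cfirstmain}, for any $a \in \mathbb{C} \cup \{\infty\}$,
\begin{equation*}
N_{f}(a,r) \le T_{f}(r) + O(1) = O(\log r).
\end{equation*}
Since $N_{f}(a,r) \ge n_{f}(a,r/e)\int_{r/e}^{r}dt/t = n_{f}(a,r/e)$ (approximately), the counting numbers $n_{f}(a,r)$ are uniformly bounded in $r$ for each fixed $a$. Applied to $a = \infty$, this shows $f$ has only finitely many poles, at points $z_{1},\ldots,z_{k}$ with multiplicities $m_{1},\ldots,m_{k}$. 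Put $P(z) := \prod_{i=1}^{k}(z - z_{i})^{m_{i}}$ and set $g := fP$. Then $g$ is entire, and by the easy direction applied to $P$ together with standard subadditivity of $T$, we have $T_{g}(r) \le T_{f}(r) + T_{P}(r) + O(1) = O(\log r)$.

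The remaining step is to upgrade the integrated bound $T_{g}(r) = O(\log r)$ to a pointwise bound $|g(z)| = O(|z|^{N})$ that forces $g$ to be a polynomial. This is where I expect the main technical obstacle and the place to be careful. Since $g$ is entire, $T_{g}(r) = m_{g}(\infty,r)$, and the Poisson--Jensen formula (or equivalently the standard inequality $\log M(r,g) \le \tfrac{R+r}{R-r}\, T_{g}(R) + O(1)$ for $R > r$) applied with $R = 2r$ yields
\begin{equation*}
\log M(r,g) \le 3\, T_{g}(2r) + O(1) = O(\log r).
\end{equation*}
Therefore $M(r,g) \le C r^{N}$ for some integer $N$ and all large $r$. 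By the Cauchy estimates applied to the Taylor coefficients of $g$ at the origin, all coefficients beyond index $N$ vanish, so $g$ is a polynomial of degree at most $N$. Consequently $f = g/P$ is a quotient of polynomials, i.e. $f \in \mathbb{C}(z)$, completing the proof.
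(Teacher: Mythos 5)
Your proof is correct and follows essentially the same route as the source the paper relies on: the paper does not prove this statement itself but cites Lang's book, and the standard argument there is exactly yours (finitely many poles extracted from the counting function, clear them with a polynomial $P$, then use $\log M(r,g)\le \frac{R+r}{R-r}T_g(R)$ and Cauchy estimates to force $g=fP$ to be a polynomial). One small point to tighten: the estimate $n_f(\infty,r/e)\le N_f(\infty,r)+O(1)$ by itself only yields $n_f(\infty,r)=O(\log r)$ rather than uniform boundedness, so you should add the one-line observation that if $n_f(\infty,t)\to\infty$ then $N_f(\infty,r)/\log r\to\infty$, contradicting $N_f(\infty,r)=O(\log r)$; this gives the finiteness of the set of poles that the rest of your argument needs.
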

We recall the lemma on the logarithmic derivative. (See \cite[Theorem~A1.2.5]{ru2001nevanlinna}.)
\begin{lemma}
 \label{lemma_log_deri} Let $f$ be a non-constant meromorphic function on $\CC$. Then for any $\varepsilon>0$, 
   \begin{equation*}
       m_{f'/f}(\infty,r)\le_{\exc} \log T_f(r)+(1+\varepsilon)\log^+\log T_f(r)+O(1).
   \end{equation*}
\end{lemma}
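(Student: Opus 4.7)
The plan is to derive the lemma from the classical Poisson--Jensen representation of $\log f$ combined with Borel's growth lemma to pass from an auxiliary radius $\rho>r$ back to $r$ itself. For $|z|<\rho$ one has
\[
\log f(z) = \frac{1}{2\pi}\int_0^{2\pi}\frac{\rho e^{i\theta}+z}{\rho e^{i\theta}-z}\log|f(\rho e^{i\theta})|\,d\theta + \sum_\nu \log\frac{\rho(z-a_\nu)}{\rho^2-\bar a_\nu z} - \sum_\mu\log\frac{\rho(z-b_\mu)}{\rho^2-\bar b_\mu z},
\]
where $\{a_\nu\}$ and $\{b_\mu\}$ are the zeros and poles of $f$ in the closed disk of radius $\rho$. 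Differentiating in $z$ yields a pointwise formula for $f'/f$ as a Poisson-type integral of $\log|f|$ on $|\zeta|=\rho$ plus rational terms in $z-a_\nu$ and $z-b_\mu$.

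Next I would estimate $m_{f'/f}(\infty,r)=\int_0^{2\pi}\log^+|f'(re^{i\varphi})/f(re^{i\varphi})|\,d\varphi/(2\pi)$ using this representation. Jensen's inequality applied to $\log^+$ bounds the boundary contribution by a constant multiple of $\log^+(T_f(\rho)/(\rho-r)^2)+O(\log\rho)$; the zero/pole sums are controlled using the elementary average estimate $\int_0^{2\pi}\log^+|re^{i\varphi}-c|^{-1}d\varphi = O(1+\log^+(1/|r-|c||))$ together with $N_f(0,\rho)+N_f(\infty,\rho)\le 2T_f(\rho)+O(1)$, the latter by the First Main Theorem applied with $a=0$. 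After collecting terms one gets, for any $\rho>r$,
\[
m_{f'/f}(\infty,r)\le \log^+\frac{T_f(\rho)}{\rho-r}+O(\log r+1).
\]

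Finally I would invoke Borel's growth lemma: for a continuous non-decreasing function $\phi$ with $\phi(r)\to\infty$ one has $\phi\bigl(r+1/(\phi(r)\log^{1+\varepsilon}\phi(r))\bigr)\le 2\phi(r)$ for all $r$ outside a set of finite Lebesgue measure. Applied with $\phi=T_f$ and the choice $\rho-r=1/(T_f(r)\log^{1+\varepsilon}T_f(r))$, this replaces $T_f(\rho)$ on the right-hand side by $2T_f(r)+O(1)$ and the denominator $\rho-r$ by $T_f(r)\log^{1+\varepsilon}T_f(r)$, turning the bound into $\log T_f(r)+(1+\varepsilon)\log^+\log T_f(r)+O(1)$ off the exceptional set, which gives the lemma.

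The main obstacle is tracking the constants and the exceptional set precisely enough to retain the sharp coefficient $(1+\varepsilon)$ on $\log^+\log T_f(r)$; a naive single application of the growth lemma would yield the weaker coefficient $2+\varepsilon$, so the choice of $\rho-r$ must be tuned against $T_f(r)$ exactly as above, and one must be careful that the exceptional set arising from Borel's lemma still has finite Lebesgue measure after the substitution, which is the delicate point in Nevanlinna's classical argument.
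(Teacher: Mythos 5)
The paper does not prove this lemma at all: it simply cites it as Theorem~A1.2.5 of Ru's book \cite{ru2001nevanlinna}, so there is no ``paper proof'' to compare against. Your Poisson--Jensen plus Borel framework is indeed the classical route to the sharp lemma on the logarithmic derivative, so the architecture is right.

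However, there are two concrete gaps. First, the arithmetic in the final step is wrong: with $\rho-r=1/\bigl(T_f(r)\log^{1+\varepsilon}T_f(r)\bigr)$ and $T_f(\rho)\le 2T_f(r)+O(1)$ one gets
\[
\log^+\frac{T_f(\rho)}{\rho-r}\le \log\bigl(2T_f(r)\cdot T_f(r)\log^{1+\varepsilon}T_f(r)\bigr)+O(1)=2\log T_f(r)+(1+\varepsilon)\log\log T_f(r)+O(1),
\]
i.e.\ coefficient $2$ rather than $1$; you have shrunk $\rho-r$ so aggressively that $\log\tfrac{1}{\rho-r}$ by itself already contributes a full $\log T_f(r)$. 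To land on the sharp coefficient one must take the much larger step $\rho-r\asymp 1/\log^{1+\varepsilon}T_f(r)$, and then the Borel-type statement you quoted (which only handles steps of size $1/(\phi\log^{1+\varepsilon}\phi)$, a strictly smaller step covered by the elementary $\phi(r+1/\phi(r))\le 2\phi(r)$) no longer applies directly; one must reprove a Borel lemma for this larger step, where the finiteness of the exceptional set's measure comes from $\sum_n n^{-(1+\varepsilon)}<\infty$. Second, your intermediate estimate carries an uncancelled $O(\log r)$, which cannot be absorbed into $\log T_f(r)+(1+\varepsilon)\log^+\log T_f(r)+O(1)$ since $\log r$ need not be $O(\log T_f(r))$ for slowly growing transcendental $f$; the Gol'dberg--Grinshtein form $m_{f'/f}(\infty,r)\le\log^+\!\bigl(\tfrac{\rho}{r}\cdot\tfrac{T_f(\rho)}{\rho-r}\bigr)+O(1)$ is what one actually needs, because the $\rho/r$ factor prevents a stray $\log r$ from surviving. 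So the strategy is sound but the tuning of $\rho-r$, the form of the Borel lemma, and the intermediate Poisson--Jensen estimate all need to be corrected before the argument closes.
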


Let ${\mathbf f}: \CC \rightarrow \PP^n(\CC)$ be a  holomorphic map and $ (f_0,\dots,f_n)$ be a reduced representation of  ${\mathbf f}$, i.e. $f_0,\dots, f_n$ are  entire functions on $\CC$ without common zeros. The Nevanlinna-Cartan {\it characteristic function}  $T_{\mathbf f}(r)$ is defined by
$$T_{\mathbf f}(r) =   \int_0^{2\pi} \log\max\{|f_0(re^{i\theta})|,\dots ,|f_n(re^{i\theta})|\} \frac{d\theta}{2\pi}+O(1).$$
This definition is independent, up to an additive constant, of the choice of the reduced representation of ${\mathbf f}$. 
 
 We will make use of the following   elementary inequality . (See \cite[Lemma 2.5]{levin2019greatest} for a proof.)

 \begin{lemma}
 \label{gineq}
 Let $g_1,\ldots, g_n$ be meromorphic functions.  Let ${\bf g}:=(1,g_1,\dots,g_n):\CC\to\PP^n.$  Then 
 \begin{align*}
 T_{g_i}(r)\le T_{\bf g}(r)\leq \sum_{i=1}^nT_{g_i}(r)+O(1),
 \end{align*}
 for $1\le i\le n$.
 \end{lemma}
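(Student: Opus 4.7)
The plan is to express $T_{\mathbf g}(r)$ in terms of a reduced representation of $\mathbf g$ and then bound both sides by elementary pointwise and integral comparisons. First I would choose entire functions $h_0,h_1,\ldots,h_n$ without common zeros such that $g_i=h_i/h_0$ for each $i$; such a reduced representation exists by taking a common denominator of the $g_i$ and then dividing out any common factor of the resulting $(n+1)$-tuple. Then by definition
$$T_{\mathbf g}(r)=\int_0^{2\pi}\log\max_{0\le j\le n}|h_j(re^{i\theta})|\,\frac{d\theta}{2\pi}+O(1),$$
and factoring $|h_0|$ out of the maximum together with Jensen's formula (applied to the entire function $h_0$) yields
$$T_{\mathbf g}(r)=\int_0^{2\pi}\log\max\{1,|g_1|,\ldots,|g_n|\}(re^{i\theta})\,\frac{d\theta}{2\pi}+N_{h_0}(0,r)+O(1).$$

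The key pointwise identity is $v_z(h_0)=\max_{1\le i\le n}v_z^-(g_i)$ for every $z\in\CC$. This follows from the no-common-zero property: if $v_z(h_0)=0$, then no $g_i$ has a pole at $z$ and both sides vanish; if $v_z(h_0)>0$, then at least one $v_z(h_i)$ with $i\ge 1$ must vanish (else $h_0,h_1,\ldots,h_n$ would share the zero $z$), giving $v_z^-(g_i)=v_z(h_0)-v_z(h_i)=v_z(h_0)$, which is manifestly the maximum. Consequently $N_{h_0}(0,r)$ coincides with the counting function of $\max_i v_z^-(g_i)$.

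For the upper bound I would combine the elementary estimates $\log\max\{1,|g_1|,\ldots,|g_n|\}\le\sum_{i=1}^{n}\log^+|g_i|$ and $\max_i v_z^-(g_i)\le\sum_{i=1}^{n} v_z^-(g_i)$. Integrating and invoking the decomposition $T_{g_i}(r)=m_{g_i}(\infty,r)+N_{g_i}(\infty,r)$ then gives $T_{\mathbf g}(r)\le\sum_{i=1}^{n}T_{g_i}(r)+O(1)$. The lower bound $T_{g_i}(r)\le T_{\mathbf g}(r)$ is dual: $\log^+|g_i|\le\log\max\{1,|g_1|,\ldots,|g_n|\}$ and $v_z^-(g_i)\le\max_j v_z^-(g_j)=v_z(h_0)$ integrate directly to the desired estimate, absorbing the resulting $O(1)$ into the ambiguity already present in the definition of $T_{\mathbf g}$.

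There is no substantive obstacle here: the lemma is a bookkeeping statement that follows from the First Main Theorem conventions together with the structure of a reduced representation. The one point requiring any care is the pointwise valuation identity, which rests on the no-common-zero property together with the asymmetric role of $h_0$ in the relation $v_z^-(g_i)=\max\{0,v_z(h_0)-v_z(h_i)\}$.
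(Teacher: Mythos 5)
Your proof is correct. The paper itself gives no proof of this lemma---it merely cites \cite[Lemma 2.5]{levin2019greatest}---and your argument is precisely the standard one underlying that reference: pass to a reduced representation $g_i=h_i/h_0$, split the Cartan integrand as $\log\max_j|h_j|=\log|h_0|+\log\max\{1,|g_1|,\ldots,|g_n|\}$, convert the $\log|h_0|$ term to $N_{h_0}(0,r)$ via Jensen, and then use the pointwise identity $v_z(h_0)=\max_i v_z^-(g_i)$ (a consequence of the no-common-zero property) together with the elementary bounds $\max\le\sum$ and $\text{single}\le\max$ on both the proximity and counting parts. Your acknowledgment that the lower bound really produces $T_{g_i}(r)\le T_{\mathbf g}(r)+O(1)$, absorbed into the additive ambiguity in the definition of $T_{\mathbf g}$, is the right way to read the statement.
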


We will use the following version of a truncated second main theorem.

 \begin{theorem}[{\cite[Theorem 2.1]{ru2004truncated}}]\label{trunborel}  
   Let $\mathbf{f}=(f_0,\hdots,f_n):\CC\to\PP^n(\CC)$ be a holomorphic map with $f_0,\dots,f_n$ entire and no common zeros. Assume that $f_{n+1}$ is a holomorphic function satisfying the equation $f_0+\dots+f_n+f_{n+1}=0$. If $\sum_{i\in I}f_i\ne 0$ for any proper subset $I\subset\{0,\dots,n+1\}$, then 
    \begin{equation*}
        T_{\mathbf{f}}(r)\leq_{\exc} \sum_{i=0}^{n+1} N_{f_i}^{(n)}(0,r)+O(\log^+T_{\mathbf{f}}(r)).
    \end{equation*}
\end{theorem}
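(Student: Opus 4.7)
The plan is to deduce this statement from Cartan's classical second main theorem for hyperplanes in general position. In $\PP^n$ I would consider the $n+2$ hyperplanes
$$H_i=\{x_i=0\},\quad 0\le i\le n,\qquad H_{n+1}=\{x_0+x_1+\cdots+x_n=0\}.$$
A short determinant check shows that any $n+1$ of these have linearly independent defining forms, so they are in general position. The pullbacks under $\mathbf{f}$ are $H_i(\mathbf{f})=f_i$ for $0\le i\le n$, while the relation $\sum_{i=0}^{n+1}f_i=0$ yields $H_{n+1}(\mathbf{f})=-f_{n+1}$. Hence the right-hand side of the desired inequality coincides with $\sum_{j=0}^{n+1}N^{(n)}_{H_j(\mathbf{f})}(0,r)$, so the estimate I need to produce matches the standard conclusion of Cartan's SMT applied to these hyperplanes.

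I would then invoke Cartan's second main theorem: for a linearly non-degenerate holomorphic map $\mathbf{f}:\CC\to\PP^n$ and hyperplanes $H_1,\ldots,H_q$ in general position,
$$(q-n-1)\,T_{\mathbf{f}}(r)\le_{\exc}\sum_{j=1}^{q}N^{(n)}_{H_j(\mathbf{f})}(0,r)+O(\log^+T_{\mathbf{f}}(r)).$$
Taking $q=n+2$ gives coefficient $1$ on the left and produces exactly the claimed estimate.

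The main obstacle is handling the case when $\mathbf{f}$ is linearly degenerate, since the ``no vanishing proper sub-sum'' hypothesis is weaker than $\CC$-linear independence of $f_0,\ldots,f_n$ (for instance, $(e^z,2e^z,-3e^z)$ satisfies the sub-sum hypothesis with $n=1$ while being linearly degenerate). The standard remedy is induction on $n$: if there is a nontrivial $\CC$-linear relation among $f_0,\ldots,f_n$, one combines it with $\sum_{i=0}^{n+1}f_i=0$ to eliminate at least one $f_i$ and produce a shorter relation on a proper subset $I\subsetneq\{0,\ldots,n+1\}$; a Borel-type regrouping shows $I$ decomposes into minimal vanishing sub-sums to which the inductive hypothesis applies inside a lower projective space $\PP^m$ with $m<n$, where the truncation level $n$ is certainly sufficient. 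With linear non-degeneracy secured, one appeals directly to Cartan's SMT, whose proof rests on analyzing the Wronskian $W(f_0,\ldots,f_n)$ via the logarithmic derivative lemma (Lemma~\ref{lemma_log_deri}), with the truncation arising because $W$ vanishes at a zero of $f_i$ to order at most $n$ less than $f_i$ itself.
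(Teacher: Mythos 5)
The paper does not prove this statement; it is quoted from Ru--Wang \cite{ru2004truncated}, so your proposal must be measured against that source rather than against an argument in the text. Your treatment of the linearly non-degenerate case is correct and is indeed the core of the standard proof: the forms $x_0,\dots,x_n$ and $x_0+\cdots+x_n$ are in general position, and Cartan's theorem with truncation level $n$ and $q=n+2$ gives exactly the claim.

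The genuine gap is the linearly degenerate case, and it is not a removable technicality. After extracting a relation $\sum_{i\in I}d_if_i=0$ on a proper subset and splitting $I$ into minimal vanishing weighted sub-sums, the inductive hypothesis only bounds the characteristic functions of the partial maps $(f_i)_{i\in J}$ for each minimal block $J$; it bounds neither $T_{\mathbf{f}}$ nor anything involving the indices outside $I$, and your phrase ``with linear non-degeneracy secured, one appeals directly to Cartan's SMT'' never specifies for which map or which hyperplanes. The natural repair --- pass to the projective span $\PP^{t}$ of the image, where the induced map $\mathbf{g}$ is non-degenerate, and apply Cartan there to the $n+2$ induced forms --- fails: even under the no-vanishing-subsum hypothesis one cannot always find $t+2$ of those forms in general position in $\PP^{t}$. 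Concretely, take the six vectors $v_{1j}=c_j(e_{a_1}-e_{b_j})$ and $v_{2j}=-c_j(e_{a_2}-e_{b_j})$, $j=1,2,3$, with $(c_1,c_2,c_3)=(1,1,-2)$: they span a $4$-dimensional space, sum to zero, admit no vanishing proper subsum, yet every five of them contain a dependent four-element subset (a $4$-cycle of $K_{2,3}$), so no five are in general position; composing with a linearly non-degenerate curve in $\PP^{3}$ produces an admissible instance with $n=4$ that your reduction cannot reach. The published proof closes the degenerate case differently: having bounded the ratios $f_i/f_j$ within a minimal block via the inductive hypothesis, one substitutes back into $\sum_{i=0}^{n+1}f_i=0$ to obtain a strictly shorter identity whose coefficients are no longer constants but have growth controlled by the right-hand side, and iterates. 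That re-substitution step, together with the bookkeeping for common zeros inside a block and for the non-constant coefficients it creates, is the actual content of the degenerate case and is missing from your argument.
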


We now recall the following second main theorem for hypersurfaces with moving targets from  \cite{DethloffTan2011}.  
Let ${\mathbf f}: \CC \rightarrow \PP^n(\CC)$ be a  holomorphic map. We denote by  
\begin{equation*}
    K_{\mathbf{f}}:=\{a:a \text{ is a meromorphic function with } T_{a}(r)= o(T_{\mathbf f}(r)) \}.
\end{equation*} 

A set $\{Q_1,\dots,Q_q\}$ of homogeneous polynomials in $K_{\mathbf{f}}[x_0,\dots,x_n]$ is said to be in weakly general position if there exists $z_0\in\CC$ in which all coefficient functions of all $Q_j$, $j=1,\dots,q$ are holomorphic and such that for any $1\le j_0<\cdots<j_n\le q$ the system of equations 
$\left\{
Q_{j_t}(z_0)(x_0,\dots,x_n)=0: 0\le t\le n
\right\}$
has only the trivial solution $(x_0,\dots,x_n)=(0,\dots,0)\in\CC^{n+1}$.

The following statement is a direct consequence of  the main theorem in \cite{DethloffTan2011}.

\begin{theorem}[\cite{DethloffTan2011}]\label{SMTmoving}
Let $\mathbf{f}$ be a nonconstant meromorphic map of $\CC$ into $\PP^n$.   Let $K\subset K_{\mathbf{f}}$ be a subfield. Let $\{Q_j\}_{j=1}^q$ be a set of homogeneous polynomials in $K[x_0,\dots,x_n]$  in weakly general position and with $\deg Q_j=d_j\ge 1$. Assume that $\mathbf{f}$ is algebraically non-degenerate over $K$. Then for any $\epsilon>0$, there exists a positive integer $L$ depending only on $n$, $\epsilon$ and $d_j$, $1\le j\le q$, such that the following inequality holds:
\begin{equation*} 
(q-n-1-\epsilon)T_{\mathbf{f}}(r)\le_{\exc} \sum_{j=1}^q\frac{1}{d_j} N_{Q_j(\mathbf{f})}^{(L)}(0,r).
\end{equation*}
\end{theorem}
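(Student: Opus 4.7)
The plan is to follow the classical reduction from moving hypersurfaces to moving hyperplanes via a Veronese-type embedding, then invoke a moving-target Cartan-type second main theorem, in the spirit of Corvaja–Zannier/Evertse–Ferretti refined so that the constant depends on $n$ rather than on the embedding dimension.

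First I would normalize the degrees. Set $d:=\mathrm{lcm}(d_1,\dots,d_q)$ and replace each $Q_j$ by $\tilde Q_j:=Q_j^{d/d_j}$. Since for any entire $h$ and any integer $k\geq 1$ one has $N_{h^k}^{(kL)}(0,r)=k\,N_h^{(L)}(0,r)$, it suffices to prove, for every $\epsilon>0$ and some $L=L(n,\epsilon,d)$, the inequality
\begin{equation*}
(q-n-1-\epsilon)\,T_{\mathbf f}(r)\le_{\exc}\frac1d\sum_{j=1}^{q}N_{\tilde Q_j(\mathbf f)}^{(L)}(0,r)+\mathrm{o}(T_{\mathbf f}(r)),
\end{equation*}
where all $\tilde Q_j$ are homogeneous of common degree $d$ with coefficients in $K\subset K_{\mathbf f}$. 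Let $\mathbf v_d:\mathbb P^n\to\mathbb P^{N}$ with $N=\binom{n+d}{d}-1$ denote the Veronese embedding, and set $\mathbf F:=\mathbf v_d\circ\mathbf f$. Then $T_{\mathbf F}(r)=d\,T_{\mathbf f}(r)+O(1)$, each $\tilde Q_j$ pulls back to a linear form $\tilde L_j$ on $\mathbb P^N$ with coefficients in $K$, the weak general position of $\{Q_j\}$ carries over to weak general position of $\{\tilde L_j\}$ on the Veronese variety $V_d$, and algebraic non-degeneracy of $\mathbf f$ over $K$ translates into the statement that the image of $\mathbf F$ is Zariski dense in $V_d$ (equivalently, $\mathbf F$ is not contained in any proper $K$-linear subspace cut out on $V_d$).

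The heart of the argument is then a second main theorem for $K$-linear forms restricted to the $n$-dimensional variety $V_d$. If one only applied a straight moving-hyperplane Cartan theorem in $\mathbb P^N$, the constant would be $q-N-1$, which is useless. To recover $q-n-1$ one uses the Corvaja–Zannier/Evertse–Ferretti filtration: for a large integer $M$, one filters the space of global sections $H^0(V_d,\mathcal O(M))$ by the orders of vanishing along the divisors cut out by a suitable $(n+1)$-subset of the $\tilde L_j$, producing a basis whose product behaves, up to lower-order terms, like a high power of $\prod\tilde L_j^{c_j}$ with $c_j$'s converging to $\dim V_d/M$-weighted averages. This basis defines an auxiliary linearly non-degenerate map $\Phi:\mathbb C\to\mathbb P^{M'}$ over $K$; one applies a moving-hyperplane Cartan-type SMT to $\Phi$ (which, in the slow-growth moving setting, is precisely what \cite{DethloffTan2011} supplies, and may be proved via Ru–Vojta's method combined with the moving Wronskian estimate built on Lemma~\ref{lemma_log_deri}).

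The main obstacle is the moving-target aspect of this Cartan step: one must justify that the small functions in $K$ behave as constants up to $\mathrm{o}(T_{\mathbf F}(r))$ error, both when taking linear combinations to build the filtration and when estimating the moving Wronskian's counting and proximity contributions. This is handled by Ru–Wang/Ru–Vojta-style arguments: replace $K$ by the finitely generated subfield generated by the coefficients of the $\tilde Q_j$, enlarge $\mathbf F$ by adjoining these coefficients, and bound the enlarged characteristic function against $T_{\mathbf F}(r)+\mathrm{o}(T_{\mathbf F}(r))$. The truncation level $L$ arises, via pullback through the Veronese and the filtration, from the degree of the Wronskian of a basis of $H^0(V_d,\mathcal O(M))$ and thus depends only on $n$, $\epsilon$, and the $d_j$. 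Translating the resulting inequality for $\Phi$ back through the filtration and the Veronese yields the desired bound with constant $q-n-1-\epsilon$.
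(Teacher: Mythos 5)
The paper offers no proof of this statement: it is imported wholesale as ``a direct consequence of the main theorem'' of Dethloff--Tan \cite{DethloffTan2011}, so there is nothing internal to compare your argument against. Your outline correctly reconstructs the strategy of that cited proof --- reduction to common degree $d=\mathrm{lcm}(d_1,\dots,d_q)$, the Veronese embedding, the Corvaja--Zannier/Evertse--Ferretti filtration to improve the constant from $q-N-1$ to $q-n-1$, and a truncated Cartan-type second main theorem for moving hyperplanes in the style of Ru--Wang \cite{ru2004truncated} (not, as your parenthetical suggests, supplied by \cite{DethloffTan2011} itself) --- so it follows essentially the same route as the source, albeit as a high-level sketch that defers the moving-Wronskian and filtration estimates to the literature.
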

We note that the integer $L$ in this theorem is large as mentioned in \cite[Proposition 1.2]{DethloffTan2011}.

We will need the following version of Hilbert Nullstellensatz reformulated from \cite[Proposition 2.1]{DethloffTan2011}. (See also \cite[Chapter~XI]{waerden1967}.)
\begin{proposition}[{\cite[Proposition 2.1]{DethloffTan2011}}]\label{HilbertN}
Let $\mathbf{f}$ be a nonconstant meromorphic map of $\CC$ into $\PP^n$.  Let $K\subset K_{\mathbf{f}}$ be a subfield.  Let $\{Q_i\}_{i=1}^{n+1}$ be a set of homogeneous polynomials in $ K [x_0,\dots,x_n]$  in weakly general position and with $\deg Q_j=d_j\ge 1$.   Then there exists a positive integer $s$, $R\in K $ not identically zero and  $P_{ji}\in K[x_0,\dots,x_n]$, $1\le i,j\le n+1$,  such that 
$$
x_j^s\cdot R=\sum_{i=1}^{n+1} P_{ji} Q_i
$$
for each $0\le j\le n.$
\end{proposition}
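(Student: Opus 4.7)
The plan is to prove the statement in three steps: (i) show that over the algebraic closure $\bar K$ of $K$, the polynomials $Q_{1},\dots,Q_{n+1}$ have no common projective zero; (ii) apply the projective Hilbert Nullstellensatz to conclude that $x_j^{s}$ lies in the ideal $(Q_{1},\dots,Q_{n+1})$ in $\bar K[x_0,\dots,x_n]$ for a common $s$ and every $j$; (iii) descend this containment to $K[x_0,\dots,x_n]$ by faithful flatness.

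For step (i) I would argue by contradiction. Suppose $\xi=(\xi_0,\dots,\xi_n)\in\bar K^{n+1}\setminus\{0\}$ satisfies $Q_i(\xi)=0$ for all $i$, and set $L:=K(\xi_0,\dots,\xi_n)$, a finite algebraic extension of $K$. The point $z_0\in\CC$ furnishes a discrete valuation $v$ on $K$, whose valuation ring $\mathcal{O}_v$ consists of those $f\in K$ that are holomorphic at $z_0$; its residue field $k_v\subset\CC$ is the image of the evaluation $f\mapsto f(z_0)$. Extend $v$ to a place $w$ of $L$. Since $L/K$ is algebraic, $k_w/k_v$ is algebraic, hence $k_w\subset\bar\CC=\CC$. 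After rescaling $\xi$ by $\xi_{k}^{-1}$ for an index $k$ minimizing $w(\xi_j)$, we may assume $\xi_j\in\mathcal{O}_w$ for all $j$ with at least one $\xi_j$ a $w$-unit, and the residue $\bar\xi\in\CC^{n+1}\setminus\{0\}$ is nonzero. By the weakly general position hypothesis, the coefficients of every $Q_i$ lie in $\mathcal{O}_v\subset\mathcal{O}_w$, so reducing the identity $Q_i(\xi)=0$ modulo $\mathfrak{m}_w$ yields $Q_i(z_0)(\bar\xi)=0$ for every $i$, contradicting weakly general position.

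Having ruled out common zeros in $\PP^n(\bar K)$, the projective Nullstellensatz over the algebraically closed field $\bar K$ produces an integer $s\ge 1$ with $x_j^{s}\in(Q_{1},\dots,Q_{n+1})$ in $\bar K[x_0,\dots,x_n]$ for every $j=0,\dots,n$. Since $\bar K[x_0,\dots,x_n]$ is a free $K[x_0,\dots,x_n]$-module (any $K$-basis of $\bar K$ furnishes one), the inclusion $K[x_0,\dots,x_n]\hookrightarrow\bar K[x_0,\dots,x_n]$ is faithfully flat, whence $I\bar K[x_0,\dots,x_n]\cap K[x_0,\dots,x_n]=I$ for every ideal $I$ of $K[x_0,\dots,x_n]$. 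Applied to $I=(Q_{1},\dots,Q_{n+1})$ this forces $x_j^{s}$ to lie in the ideal already over $K$, so we can write $x_j^{s}\cdot R=\sum_{i=1}^{n+1}P_{ji}Q_i$ with $P_{ji}\in K[x_0,\dots,x_n]$ and $R=1\in K$; projecting each resulting identity onto its homogeneous component of degree $s$ (and absorbing the degree shift so that $P_{ji}$ is homogeneous of degree $s-d_i$) keeps everything in $K[x_0,\dots,x_n]$.

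The crux of the argument is step (i): upgrading the pointwise nondegeneracy at the single $z_0\in\CC$ to the algebraic statement that $Q_{1},\dots,Q_{n+1}$ have no common zero in the vastly larger set $\PP^n(\bar K)$. The specialization via an extension $w$ of the valuation $v_{z_0}$ is the right tool, and it works precisely because $\CC$ is algebraically closed, which forces every residue field lying over $z_0$ to embed into $\CC$ and thus returns an honest complex zero from a putative algebraic one. Steps (ii) and (iii) are then routine once (i) has been secured.
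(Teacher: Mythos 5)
Your proof is correct, but note that the paper does not supply its own proof of this proposition — it imports it verbatim from Dethloff--Tan. There the argument is the classical one from elimination theory: $R$ is taken to be the (Macaulay) resultant of $Q_1,\dots,Q_{n+1}$, the relation $x_j^s\cdot R=\sum_i P_{ji}Q_i$ is a formal resultant identity valid over any base ring, and the weakly-general-position hypothesis is used precisely to guarantee $R(z_0)\neq 0$, hence $R\not\equiv 0$ in $K$. Your route is genuinely different and conceptually cleaner: you first upgrade the pointwise nondegeneracy at $z_0$ to the absence of a common projective zero over $\bar K$ via specialization along a place extending $\ord_{z_0}$, then apply the projective Nullstellensatz over $\bar K$, and finally descend by faithful flatness, obtaining the identity with $R=1$. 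This makes the auxiliary factor $R$ in the statement superfluous as stated (it would matter only if one insisted that the $P_{ji}$ have coefficients in a smaller ring, e.g.\ small functions holomorphic at $z_0$); by contrast, the resultant version directly produces the $R$ named in the statement and is more explicit about $s$. One small imprecision in your write-up: $\ord_{z_0}$ restricted to the subfield $K$ need not be a \emph{discrete} valuation — it may be trivial if every element of $K$ is a unit at $z_0$ — but the specialization argument goes through unchanged, since in that case the residue map is just the evaluation embedding $K\hookrightarrow\CC$, which extends to $L$ because $\CC$ is algebraically closed.
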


Finally, we recall  the following definitions.
 A meromorphic function $f$ is {\it of finite order} (or more exactly, {\it of order $q$}) if 
     \begin{equation*}
         \limsup_{r\to \infty}\frac{\log T_f(r)}{\log r}=q.
    \end{equation*}
    In addition, if $f$ is a unit, i.e. an entire function  without zeros,  of order $q$, then  $u=e^P$ for some  $P\in\CC[z]$ of degree $q$.
Similarly, a map $\mathbf{f}:\CC\to \PP^n(\CC)$ is   said to be  {\it of finite order} (or more exactly, {\it of order $q$}) if 
     \begin{equation*}
       \limsup_{r\to \infty}\frac{\log T_{\mathbf{f}}(r)}{\log r}=q.
    \end{equation*}
\subsection{GCD in Nevanlinna theory}\label{gcd}
We  recall the gcd counting function of two meromorphic functions and a gcd theorem with moving targets from   \cite{levin2019greatest}.
 
Let $f$ and $g$ be meromorphic functions.  We let
\begin{equation*}
    n(f,g,r):=\sum_{|z|\le r}\min\{v_z^+(f),v_z^+(g)\}
\end{equation*}
and 
\begin{equation*}
    N_{\gcd}(f,g,r):=\int_0^r \frac{n(f,g,t)-n(f,g,0)}{t}dt+ n(f,g,0)\log r.
\end{equation*}
The gcd theorem we need below is a little more general than what is stated in \cite[Theorem~5.1]{levin2019greatest}, and it follows clearly from their proof. 
 
\begin{theorem}[{\cite[Theorem~5.1]{levin2019greatest}}]\label{gcd_moving_unit}
    Let $u_1,\dots,u_n$ be entire functions without zeros.   Let $K\subset K_{\mathbf{u}}$ be a subfield, and let $F,G\in K[x_1,\dots,x_n]$ be nonconstant coprime polynomials. Then  for every $\varepsilon>0$  the following inequality holds
    \begin{equation*}
        N_{\gcd}(F(u_1,\dots,u_n),G(u_1,\dots,u_n),r)\le_{\exc}\varepsilon \max_{1\le j\le n}\{T_{u_j}(r)\},    \end{equation*}
        if $u_1,\dots, u_n$ are algebraically independent over $K$.
     \end{theorem}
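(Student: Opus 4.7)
The plan is to adapt the blow-up plus second main theorem argument of \cite{levin2019greatest} from the constant-coefficient setting to the moving-coefficient setting in which the target coefficients lie in $K\subset K_{\mathbf u}$. The geometric idea is that on a suitable blow-up of $\PP^n$ the exceptional divisor tracks the gcd counting function, while the log canonical divisor absorbs a positive multiple of that exceptional divisor once the toric boundary is included; the moving-target SMT (Theorem~\ref{SMTmoving}) then converts the boundary-avoidance of $\mathbf u$ into the desired upper bound.

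First I would regard $\mathbf u$ as the holomorphic curve $[1:u_1:\cdots:u_n]\colon\CC\to\PP^n$, which avoids the toric boundary $D_\infty=\sum_{i=0}^{n}H_i$ (with $H_i=\{x_i=0\}$) because each $u_j$ is a unit. Let $\widetilde F,\widetilde G\in K[x_0,\dots,x_n]$ be the homogenisations of $F,G$ and let $V\subset\PP^n$ be the Zariski closure of $\{\widetilde F=\widetilde G=0\}\cap\GG_m^n$; since $F,G$ are coprime in the UFD $K[x_1,\dots,x_n]$, the subvariety $V$ has codimension $c\ge 2$. Let $\pi\colon\widetilde Y\to\PP^n$ be a blow-up along $V$ (composed with a resolution of singularities if needed) with exceptional divisor $E$, and let $\widetilde{\mathbf u}\colon\CC\to\widetilde Y$ be the lift of $\mathbf u$. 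Because $E$ is set-theoretically supported on the common zeros of $\widetilde F,\widetilde G$ with multiplicity one, $N_{\gcd}(F(\mathbf u),G(\mathbf u),r)=N_{\widetilde{\mathbf u}}(E,r)+O(\log r)$, and a canonical-class computation on the blow-up of the Fano variety $\PP^n$ along a codimension-$\ge 2$ centre yields $K_{\widetilde Y}+\pi^{*}D_\infty=aE$ for some integer $a\ge 1$, using $K_{\PP^n}+D_\infty\sim 0$ and the standard discrepancy formula.

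Next I would apply a moving-target Ru--Vojta style second main theorem to $\widetilde{\mathbf u}\colon\CC\to\widetilde Y$ with divisor $D=\pi^{*}D_\infty$. Because Theorem~\ref{SMTmoving} is only formulated on $\PP^n$, this is effected by choosing auxiliary $K$-rational hypersurfaces $Q_1,\dots,Q_q$ on $\PP^n$ of a common large degree $N$, lying in the ideal $(\widetilde F,\widetilde G)$ with order of vanishing exactly one along $V$, such that the enlarged family $\{H_0,\dots,H_n,Q_1,\dots,Q_q\}$ is in weakly general position at a common holomorphy point $z_0\in\CC$. Algebraic independence of $u_1,\dots,u_n$ over $K$ supplies the algebraic non-degeneracy of $\mathbf u$ over $K$ demanded by Theorem~\ref{SMTmoving}, and the vanishing $N_{H_i(\mathbf u)}(0,r)=0$ combined with the canonical-class identity yields
\[ a\,T_{E,\widetilde{\mathbf u}}(r)\le_{\exc}\varepsilon\,T_{A,\widetilde{\mathbf u}}(r)+O(\log r) \]
for any prescribed $\varepsilon>0$ and a fixed ample divisor $A$ on $\widetilde Y$. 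Since $T_{A,\widetilde{\mathbf u}}(r)=O(T_{\mathbf u}(r))=O(\max_{1\le j\le n}T_{u_j}(r))$ by Lemma~\ref{gineq}, the First Main Theorem $N_{\widetilde{\mathbf u}}(E,r)\le T_{E,\widetilde{\mathbf u}}(r)+O(1)$ assembles everything to
\[ N_{\gcd}(F(\mathbf u),G(\mathbf u),r)\le_{\exc}\tfrac{\varepsilon}{a}\max_{1\le j\le n}T_{u_j}(r)+O(\log r), \]
which, after replacing $\varepsilon$ by $a\varepsilon$, is the stated inequality.

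The principal technical obstacle is implementing the moving-target second main theorem on the blow-up $\widetilde Y$. Because Theorem~\ref{SMTmoving} is formulated on $\PP^n$ under an algebraic non-degeneracy hypothesis valid there, realising the height $T_{E,\widetilde{\mathbf u}}$ on $\PP^n$ requires auxiliary $K$-rational hypersurfaces in $(\widetilde F,\widetilde G)$ of the correct vanishing order along $V$, and one must verify that a Grassmannian generic choice over the infinite field $K$ furnishes hypersurfaces in weakly general position at a single holomorphy point $z_0\in\CC$ shared by all coefficient functions. The dimension count $\dim_K(\widetilde F,\widetilde G)\cap K[x_0,\dots,x_n]_{=N}=\binom{N+n}{n}-O(N^{n-1})\to\infty$ as $N\to\infty$ ensures enough freedom, and the assumption $K\subset K_{\mathbf u}$ ensures the coefficient heights contribute only $o(T_{\mathbf u}(r))$ to the SMT error, allowing Theorem~\ref{SMTmoving} to be invoked with $K$-rational moving targets; this adaptation of the constant-coefficient Grassmannian argument of \cite{levin2019greatest} is the core technical step.
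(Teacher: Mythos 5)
The paper does not actually reprove this theorem for $n\ge 2$: it quotes \cite[Theorem 5.1]{levin2019greatest} and observes that the moving-coefficient version (coefficients in $K\subset K_{\mathbf u}$, with the $u_j$ algebraically independent over $K$) follows from the same proof, which already reduces to a second main theorem with moving targets; only the case $n=1$ is argued in the text, via a B\'ezout identity $AF+BG=1$ in $K[x]$ and the bound $N_{\beta}(\infty,r)=o(T_u(r))$ for $\beta\in K$. Your blow-up picture --- the exceptional divisor $E$ over $V=\{\widetilde F=\widetilde G=0\}$ recording $N_{\gcd}$, the units forcing maximal proximity to the toric boundary, and the discrepancy $K_{\widetilde Y}+\pi^{*}D_\infty=(c-1)E$ --- is the correct heuristic behind Levin--Wang's theorem, but the step in which you extract $a\,T_{E,\widetilde{\mathbf u}}(r)\le_{\exc}\varepsilon\,T_{A,\widetilde{\mathbf u}}(r)$ from Theorem~\ref{SMTmoving} contains a genuine gap.

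Two concrete problems. First, hypersurfaces $Q_1,\dots,Q_q$ chosen inside the ideal $(\widetilde F,\widetilde G)$ all contain $V$; hence in the only nontrivial case $V\neq\emptyset$, as soon as $q\ge n+1$ any $n+1$ of them have a nontrivial common zero at every $z_0$, so the family $\{H_0,\dots,H_n,Q_1,\dots,Q_q\}$ is never in weakly general position and Theorem~\ref{SMTmoving} does not apply to it. Second, and more fundamentally, a Dethloff--Tan type inequality is a lower bound for $\sum_j d_j^{-1}N^{(L)}_{Q_j(\mathbf u)}(0,r)$, i.e.\ an upper bound for the proximity functions; with sections vanishing to order exactly one along $V$ this gives at best the trivial estimate $N_{\gcd}\le T_{\mathbf u}(r)+o(T_{\mathbf u}(r))$ and, on the blow-up, it controls $m_{\widetilde{\mathbf u}}(E,r)$ rather than $N_{\widetilde{\mathbf u}}(E,r)=T_{E,\widetilde{\mathbf u}}(r)-m_{\widetilde{\mathbf u}}(E,r)+O(1)$, which is the wrong piece of $T_E$. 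The inequality $a\,T_E\le\varepsilon T_A$ you write down is precisely Vojta's conjecture for the blow-up along $V$; making it unconditional is the whole content of Levin--Wang's proof, which uses the Ru--Vojta general theorem (equivalently, an Autissier/Corvaja--Zannier filtration by sections of $\pi^{*}\mathcal O(N)-mE$ with $m$ large, built from products $\mathbf x^{\mathbf i}\widetilde F^{p}\widetilde G^{q}$) together with the $h^0$-estimates afforded by $\operatorname{codim}V\ge 2$; it is these estimates that convert the discrepancy $(c-1)E$ into a negative multiple of $T_E$ on the right-hand side. That filtration argument is the missing core of your proposal; once it is supplied, the passage from constant to $K$-rational coefficients is indeed the routine part, since the final input is an SMT with moving hyperplanes.
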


 \begin{remark}
The proof of \cite[Theorem~5.1]{levin2019greatest} actually only treat the case where   $n\ge2$ because the case where $n=1$ is much easier, as shown in the following argument below.  Let $u$ be an entire function without zero.
Suppose that $F$  and $G$ are coprime polynomials in $K[x ]$.
 Then there exist $A,B\in K [x]$ such that $AF+BG=1$.  Then  $A(u)F(u)+B(u)G(u)=1$ and hence
\begin{align}\label{poles}
\min\{v_z^+(F(u)),v_z^+(G(u))\}\le \max\{v_z^-(A(u)),v_z^-(B(u))\}.
\end{align}
We note that $F(u)$ and $G(u)$ are not indentical zero under the assumption that $u$ is not algebraic over $K$.
 Since $u$ has no pole, the right hand side of \eqref{poles} is bounded by the number of poles of the coefficients of $A$ and $B$.   By  Theorem \ref{Cfirstmain},  $N_{\beta}(\infty,r)\le T_{\beta}(r)+O(1)={\rm o}(  T_{u}(r))$ for any $\beta\in K$.  Therefore, \eqref{poles} implies that  
\begin{align*}
            N_{\gcd}(F(u),G(u),r)\le {\rm o}(  T_{u}(r)).
 \end{align*}
\end{remark}

 \subsection{Basic Results}
\begin{lemma}\label{a'}
Let ${\mathbf f}: \CC \rightarrow \PP^n(\CC)$ be a  holomorphic map. Then $a'\in K_{\mathbf{f}}$ for every $a\in K_{\mathbf{f}}$.
\end{lemma}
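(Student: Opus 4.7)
The plan is to bound $T_{a'}(r)$ in terms of $T_a(r)$ plus lower-order terms, then invoke the growth condition defining $K_{\mathbf{f}}$. The bound will come from splitting into proximity and counting parts, applying the logarithmic derivative lemma to the first, and using the elementary observation that differentiation increases pole orders by exactly one for the second.

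First, if $a$ is constant, then $a' = 0$ and the claim is immediate, so I assume $a$ is non-constant. Writing $a' = (a'/a) \cdot a$ and using the subadditivity of the proximity function on products, I would estimate
\begin{equation*}
m_{a'}(\infty,r) \leq m_{a'/a}(\infty,r) + m_a(\infty,r).
\end{equation*}
By Lemma \ref{lemma_log_deri} applied to $a$, for any fixed $\varepsilon > 0$,
\begin{equation*}
m_{a'/a}(\infty,r) \leq_{\exc} \log T_a(r) + (1+\varepsilon)\log^+\log T_a(r) + O(1).
\end{equation*}
For the counting part, note that a pole of $a$ of order $k$ at a point $z_0$ produces a pole of $a'$ of order $k+1$ at $z_0$, and no other poles arise. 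Hence $N_{a'}(\infty,r) \leq N_a(\infty,r) + N^{(1)}_a(\infty,r) \leq 2 N_a(\infty,r)$.

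Combining these estimates with $T_a(r) = m_a(\infty,r) + N_a(\infty,r)$, I obtain
\begin{equation*}
T_{a'}(r) = m_{a'}(\infty,r) + N_{a'}(\infty,r) \leq_{\exc} 2\, T_a(r) + \log T_a(r) + O(\log^+\log T_a(r)) + O(1).
\end{equation*}
Since $\mathbf{f}$ is nonconstant, $T_{\mathbf{f}}(r) \to \infty$, and the assumption $a \in K_{\mathbf{f}}$ gives $T_a(r) = o(T_{\mathbf{f}}(r))$. Thus $\log T_a(r) = O(\log T_{\mathbf{f}}(r)) = o(T_{\mathbf{f}}(r))$, so the right-hand side is $o(T_{\mathbf{f}}(r))$ outside an exceptional set of finite Lebesgue measure, proving $a' \in K_{\mathbf{f}}$.

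The only real subtlety is the exceptional set in the logarithmic derivative estimate: one needs the convention for $K_{\mathbf{f}}$ to allow the $o(\cdot)$ relation modulo such sets, or else one argues via monotonicity that the bound $T_{a'}(r) = o(T_{\mathbf{f}}(r))$ passes to all $r$. This is standard in Nevanlinna theory, and no other step presents any difficulty.
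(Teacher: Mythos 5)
Your proof is correct and follows essentially the same route as the paper's: both hinge on the lemma on the logarithmic derivative to control the proximity part and on direct pole-counting for the counting part. The only difference is bookkeeping — the paper first bounds $T_{a'/a}(r)$ (via $N_{a'/a}(\infty,r)\le N^{(1)}_a(0,r)+N^{(1)}_a(\infty,r)\le 2T_a(r)+O(1)$ together with the logarithmic derivative estimate) and then uses $T_{a'}(r)\le T_{a'/a}(r)+T_a(r)$, whereas you estimate $m_{a'}$ and $N_{a'}$ separately; the paper's version also shares the same exceptional-set subtlety you flag, so your observation there is on target as well.
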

 \begin{proof}
 We may suppose that $a$ is non-constant. Noting that $N_{a'/a}(\infty,r)\le N^{(1)}_a(0,r)+ N^{(1)}_a(\infty,r)$, Theorem \ref{Cfirstmain}  implies  $N_{a'/a}(\infty,r)\le 2T_a(r)+O(1)$. On the other hand, we have from Lemma  \ref{lemma_log_deri} that  $m_{a'/a}(\infty,r)\le_{\exc} O(\log T_a(r))$. Now  the condition $a\in K_{\mathbf{f}}$ implies that $T_{a'/a}(r)=  o(T_{\mathbf f}(r))$ and thus $T_{a'}(r)\le T_{a'/a}(r)+T_a(r)=  o(T_{\mathbf f}(r))$ as desired. 
 \end{proof}

We refer to \cite{GSW20} for proofs of the following statements.
\begin{lemma}[{\cite[Lemma 2.6]{GSW20}}]\label{u'/u}
Let $f$ be a nonconstant meromorphic function satisfying 
\begin{align*} 
N^{(1)}_f(0,r)+N^{(1)}_f(\infty,r)={\rm o}( T_f(r)).
\end{align*}
Then $T_{f'/f}(r)\le_{\exc} {\rm o}( T_f(r))$. 
\end{lemma}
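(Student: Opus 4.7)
The plan is to split the characteristic function as $T_{f'/f}(r) = m_{f'/f}(\infty, r) + N_{f'/f}(\infty, r)$ and to show that each of these two terms is ${\rm o}(T_f(r))$ (outside an exceptional set of finite Lebesgue measure, where needed). Each half uses a standard ingredient already available in Section~2.

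For the proximity piece I would apply the logarithmic-derivative estimate (Lemma~\ref{lemma_log_deri}) directly to obtain
\[
m_{f'/f}(\infty, r) \le_{\exc} \log T_f(r) + (1+\varepsilon)\log^+\log T_f(r) + O(1).
\]
Since $f$ is nonconstant, $T_f(r) \to \infty$ as $r \to \infty$, so the right-hand side is visibly ${\rm o}(T_f(r))$.

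For the counting piece I would invoke the elementary fact that $f'/f$ has only simple poles, located precisely at the zeros and poles of $f$. Indeed, if $f$ has a zero or pole of order $k$ at $z_0$, writing $f(z) = (z - z_0)^{\pm k} h(z)$ with $h$ holomorphic and nonvanishing at $z_0$ gives $f'/f = \pm k/(z - z_0) + h'(z)/h(z)$, a simple pole at $z_0$ regardless of $k$. Consequently,
\[
N_{f'/f}(\infty, r) \le N^{(1)}_f(0, r) + N^{(1)}_f(\infty, r),
\]
which is ${\rm o}(T_f(r))$ by hypothesis. Adding the two bounds yields $T_{f'/f}(r) \le_{\exc} {\rm o}(T_f(r))$, as claimed. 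The argument is routine and presents no real obstacle; it is just the logarithmic-derivative lemma combined with the simple-pole observation for $f'/f$.
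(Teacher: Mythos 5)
Your proof is correct and uses exactly the standard decomposition $T_{f'/f} = m_{f'/f}(\infty,\cdot) + N_{f'/f}(\infty,\cdot)$, controlling the proximity term by the logarithmic-derivative lemma and the counting term by the simple-pole observation $N_{f'/f}(\infty,r)\le N^{(1)}_f(0,r)+N^{(1)}_f(\infty,r)$; this matches the argument in the cited source and is the same pair of ingredients the paper itself uses in the proof of Lemma~\ref{a'}.
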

 \begin{proposition}[{\cite[Corollary 2.8]{GSW20}}]\label{Borel1co}
Let   $u_0,\dots,u_n$ be  units, i.e. entire functions without zeros, and let ${\bf u}=(u_0,\hdots,u_n)$.  Let $K$ be a subfield of $K_{\bf u}$.
If $u_0,\dots,u_n$ are algebraically dependent over $K$, then they are multiplicatively dependent modulo $ K_{\bf u}$.
\end{proposition}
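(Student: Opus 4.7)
The plan is to argue by contradiction. Suppose $u_0,\dots,u_n$ are algebraically dependent over $K$ but multiplicatively independent modulo $K_\mathbf{u}$. Pick a nonzero polynomial $P\in K[x_0,\dots,x_n]$ with $P(u_0,\dots,u_n)=0$ having the minimum possible number of monomials, yielding a relation
\[
\sum_{i=1}^m c_i M_i = 0,\qquad c_i\in K\setminus\{0\},\quad M_i=\prod_{j=0}^n u_j^{\alpha_{i,j}}\ \text{pairwise distinct},
\]
such that, by the minimality, no proper subsum vanishes. Since each $M_i$ is a unit we have $m\ge 2$.

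If $m=2$, then $M_1/M_2=-c_2/c_1\in K\subseteq K_\mathbf{u}$, and because $M_1\neq M_2$ the exponent vector $\alpha_1-\alpha_2$ is nonzero, giving a nontrivial multiplicative relation modulo $K_\mathbf{u}$, contradicting the assumption. So I may assume $m\ge 3$.

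For $m\ge 3$, after dividing the relation by $c_1 M_1$ and setting $w_i=(c_{i+1}/c_1)(M_{i+1}/M_1)$, I consider the holomorphic map $\mathbf{f}=(1:w_1:\cdots:w_{m-1}):\CC\to\PP^{m-1}$, presented by a reduced representation after multiplying through by a common entire denominator of the $c_i$. Each $M_{i+1}/M_1$ is a unit, so the zeros of $w_i$ come only from $c_{i+1}/c_1\in K$; hence by Theorem \ref{Cfirstmain} and the definition of $K\subseteq K_\mathbf{u}$,
\[
N^{(m-2)}_{w_i}(0,r)\le (m-2)\bigl(T_{c_{i+1}/c_1}(r)+O(1)\bigr) = o(T_\mathbf{u}(r)).
\]
Minimality of the original relation forces no proper subsum of $\{1,w_1,\dots,w_{m-1}\}$ to vanish, so Theorem \ref{trunborel} applies and gives
\[
T_\mathbf{f}(r)\le_{\exc} o(T_\mathbf{u}(r))+O(\log^+ T_\mathbf{f}(r)),
\]
which, by the standard upgrade based on the monotonicity of characteristic functions and the finiteness of the exceptional set, improves to $T_\mathbf{f}(r)= o(T_\mathbf{u}(r))$. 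On the other hand, Lemma \ref{gineq} yields
\[
T_\mathbf{f}(r)\ge T_{w_i}(r)+O(1)\ge T_{M_{i+1}/M_1}(r)-T_{c_{i+1}/c_1}(r)+O(1),
\]
so $T_{M_{i+1}/M_1}(r)=o(T_\mathbf{u}(r))$, i.e.\ $M_{i+1}/M_1\in K_\mathbf{u}$. Since $M_{i+1}\neq M_1$, this produces a nontrivial multiplicative relation among the $u_j$ modulo $K_\mathbf{u}$, contradicting our standing assumption.

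The step I expect to require the most care is the clean application of Theorem \ref{trunborel}: one must first extract a reduced entire representation of $\mathbf{f}$ from the meromorphic tuple (absorbing the poles of the coefficients into the common factor), and then rigorously pass from the `$\le_{\exc}$' bound with the $O(\log^+ T_\mathbf{f}(r))$ error to a genuine $o(T_\mathbf{u}(r))$ bound. Both are standard maneuvers in moving-target Nevanlinna theory, but they must be executed attentively so that the exceptional Lebesgue set does not obstruct the final contradiction.
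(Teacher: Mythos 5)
Your proof is correct in substance and follows the natural Borel-theorem route that underlies the cited result. Two points deserve sharpening. First, Theorem \ref{trunborel} requires a reduced \emph{entire} representation, so the counting functions that actually enter are $N^{(m-2)}_{g_i}(0,r)$ for $g_i:=hc_iM_i$ (with $h$ a minimal common denominator of the $c_i$), not the $N^{(m-2)}_{w_i}(0,r)$ you wrote; the zeros of $g_i$ are those of $hc_i$ (since $M_i$ is a unit), so $N_{g_i}(0,r)\le \sum_j N_{c_j}(\infty,r)+N_{c_i}(0,r)=o(T_{\mathbf u}(r))$ by Theorem \ref{Cfirstmain}, which is what you need, but the displayed inequality with the factor $(m-2)$ is not the right bookkeeping. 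Second, the passage from $T_{\mathbf f}(r)\le_{\exc}o(T_{\mathbf u}(r))+O(\log^+T_{\mathbf f}(r))$ to a genuine $T_{M_{i+1}/M_1}(r)=o(T_{\mathbf u}(r))$ (no exceptional set, as the paper's definition \eqref{smallfield} of $K_{\mathbf u}$ demands) is not purely a matter of ``monotonicity and finiteness of the exceptional set'': for a nondecreasing $T_{\mathbf u}$ of arbitrary growth, one cannot in general replace $r$ by a nearby $r'\notin E$ and keep $T_{\mathbf u}(r')\lesssim T_{\mathbf u}(r)$. You flag this, which is good, but you should either invoke the convention that small functions in moving-target theory are measured up to an exceptional set, or supply a genuine regularity argument rather than calling it a ``standard upgrade.''

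Since the paper cites this statement from \cite{GSW20} without reproducing a proof, a line-by-line comparison isn't possible here; but your reduction to a minimal vanishing sum of monomials, the dichotomy $m=2$ versus $m\ge 3$, and the application of the Ru--Wang truncated second main theorem (Theorem \ref{trunborel}) to conclude $M_{i+1}/M_1\in K_{\mathbf u}$ is the expected mechanism, and with the two repairs above the argument is complete.
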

\begin{proposition}[{\cite[Corollary 2.10]{GSW20}}]\label{Borel3}
Let $n\ge 2$ and $Q_1,\dots,Q_n\in \CC[z].$  If $e^{Q_1}, \dots,e^{Q_n}$ are algebraically dependent over $\mathbb C(z)$, then they are multiplicatively dependent modulo $\mathbb C$.
\end{proposition}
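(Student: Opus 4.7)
The plan is to prove the contrapositive using the truncated second main theorem for units (Theorem \ref{trunborel}) together with the characterization of rational functions by characteristic growth (Theorem \ref{rational}). Note that simply invoking Proposition \ref{Borel1co} is insufficient, because it yields only multiplicative dependence modulo $K_{\mathbf{u}}$, whereas one needs to strengthen from $K_{\mathbf{u}}$ down to $\CC$. Concretely, I will assume $Q_1,\ldots,Q_n$ are linearly independent over $\QQ$ modulo $\CC$ (i.e., $\sum m_iQ_i\in\CC$ with $(m_1,\ldots,m_n)\in\ZZ^n$ forces all $m_i=0$) and show that $e^{Q_1},\ldots,e^{Q_n}$ are algebraically independent over $\CC(z)$.

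Suppose for contradiction there is a nontrivial relation $\sum_{\alpha\in A}c_{\alpha}(z)e^{\alpha\cdot Q(z)}=0$ with $A\subset\ZZ_{\geq 0}^n$ finite, each $c_\alpha\in\CC[z]\setminus\{0\}$ (after clearing denominators), and $\alpha\cdot Q:=\sum_i\alpha_iQ_i\in\CC[z]$. Choose such an $A$ of minimal cardinality, so that no proper subsum vanishes. The linear independence hypothesis guarantees that for distinct $\alpha,\beta\in A$ the polynomial $(\alpha-\beta)\cdot Q$ is nonconstant, whence $e^{(\alpha-\beta)\cdot Q}$ has positive order and is not a rational function.

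If $|A|=2$ the relation rearranges to $e^{(\alpha-\beta)\cdot Q}=-c_\beta/c_\alpha\in\CC(z)$, a contradiction. If $|A|=:k\geq 3$, set $f_\alpha:=c_\alpha e^{\alpha\cdot Q}$ and consider the holomorphic map $\mathbf{f}:\CC\to\PP^{k-2}$ whose homogeneous coordinates are formed by $k-1$ of the $f_\alpha$'s (after dividing out any common factor to secure a reduced representation). Minimality ensures that no proper subsum of $\sum_{\alpha\in A}f_\alpha=0$ vanishes, so Theorem \ref{trunborel} yields
\[
T_{\mathbf{f}}(r)\leq_{\exc}\sum_{\alpha\in A}N_{f_\alpha}^{(k-2)}(0,r)+O\bigl(\log^{+}T_{\mathbf{f}}(r)\bigr).
\]
Since $e^{\alpha\cdot Q}$ has no zeros, the zeros of $f_\alpha$ come only from $c_\alpha\in\CC[z]$, and the first main theorem together with Theorem \ref{rational} gives $N_{f_\alpha}^{(k-2)}(0,r)\leq N_{c_\alpha}(0,r)=O(\log r)$. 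Hence $T_{\mathbf{f}}(r)=O(\log r)$, and Lemma \ref{gineq} propagates this bound to $T_{f_\alpha/f_\beta}(r)$ for any distinct $\alpha,\beta\in A$. Applying Theorem \ref{rational} once more, $f_\alpha/f_\beta=(c_\alpha/c_\beta)e^{(\alpha-\beta)\cdot Q}\in\CC(z)$, hence $e^{(\alpha-\beta)\cdot Q}\in\CC(z)$, contradicting the non-rationality established above.

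The main obstacle I anticipate is the bookkeeping needed to pass from the raw relation to a genuine reduced representation of $\mathbf{f}$ so that Theorem \ref{trunborel} applies directly; one must clear the denominators of the $c_\alpha$'s and then extract the common factor of the resulting entire functions. Conceptually, however, the key input is simply that the rational coefficients contribute only $O(\log r)$ to the SMT counting terms, which leaves no room for the exponential growth of the ratios $f_\alpha/f_\beta$ in the putative algebraic relation.
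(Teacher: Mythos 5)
Your argument is correct. The paper itself does not prove this proposition but imports it from \cite{GSW20}, and your derivation is exactly the standard Borel-type argument that underlies the cited result: reduce the algebraic relation to a unit equation $\sum_{\alpha}c_\alpha e^{\alpha\cdot Q}=0$ with polynomial coefficients, apply the Ru--Wang truncated second main theorem (Theorem \ref{trunborel}), observe that the truncated counting functions are $O(\log r)$ because all zeros come from the $c_\alpha\in\CC[z]$, conclude $T_{\mathbf f}(r)=O(\log r)$ and hence that the ratios $e^{(\alpha-\beta)\cdot Q}$ are rational by Theorem \ref{rational}, and finally note that a zero-free function of the form $e^{P}$ with $P$ a nonconstant polynomial cannot be rational. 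The bookkeeping you flag is genuinely harmless: after dividing the relation by $\gcd_{\alpha\in A}c_\alpha$, the gcd of any $k-1$ of the (new) coefficients must divide the remaining one --- since that remaining term equals minus a sum of entire functions and $e^{\alpha\cdot Q}$ is a unit --- so it divides the total gcd and is therefore $1$; hence any $k-1$ of the terms already form a reduced representation and Theorem \ref{trunborel} applies verbatim. Your observation that Proposition \ref{Borel1co} alone is insufficient (it only gives dependence modulo $K_{\mathbf u}$, which for finite-order units controls the degree of $\sum m_iQ_i$ but does not force it to be constant) is also well taken.
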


\section{Main Lemmas}\label{main_lem}
Let $u_1,...,u_n$ be nonconstant units, i.e. entire functions without zeros.  
For convenience of discussions in the affine situation,  we simply denote by  $\mathbf{u}=(u_{1},\ldots,u_{n})$ and 
\begin{equation}\label{affinesmallfield}
    K_{\mathbf{u}}:=\{a:a \text{ is a meromorphic function with } T_{a}(r)= o( \max_{1\le i\le n}T_{u_i}(r)) \}.
\end{equation} 
Let  $\mathbf{x}:=(x_{1},\ldots,x_{n})$.  For $\mathbf{i}=(i_{1},\ldots,i_{n})\in\mathbb{Z}^{n}$, we let
$\mathbf{x^{i}}:=x_{1}^{i_{1}}\cdots x_{n}^{i_{n}}$ and  $\mathbf{u^{i}}:=u_{1}^{i_1}\cdots u_{n}^{i_{n}}$.
For a polynomial $F(\mathbf{x})=\sum_{\mathbf{i}}a_{\mathbf{i}}\mathbf{x}^{\mathbf{i}}\in K_\mathbf{u}[\mathbf{x}]:=K_\mathbf{u}[x_1,\dots,x_n]$,
we define 
\begin{align}\label{Duexpression}
D_{\mathbf{u}}(F)(\mathbf{x})
:=\sum_{\mathbf{i}}\frac{(a_{\mathbf{i}}\mathbf{u}^{\mathbf{i}})'}{\mathbf{u}^{\mathbf{i}}}\mathbf{x}^{\mathbf{i}}
=\sum_{\mathbf{i}}(a'_{\mathbf{i}}+a_{\mathbf{i}}  \cdot\sum_{j=1}^{n}i_j\frac{u_{j}'}{u_{j}})\mathbf{x}^{\mathbf{i}}.
\end{align} 
We note that $D_{\mathbf{u}}(F)(\mathbf{x})\in K_\mathbf{u}[\mathbf{x}]$  since $u_i'/u_i\in K_\mathbf{u}$, $1\le i\le n$, by Lemma \ref{u'/u}.   
Furthermore,  a direct computation shows that 
\begin{align}\label{fuvalue}
F(\mathbf{u})'=D_{\mathbf{u}}(F)(\mathbf{u}),
\end{align}
and   the following product rule: 
\begin{align}\label{productrule}
D_{\mathbf{u}}(FG)=D_{\mathbf{u}}(F)G+FD_{\mathbf{u}}(G)
\end{align}
for  $F,G\in K_\mathbf{u}[\mathbf{x}]$.
We recall the following  from  \cite{GSW20}.
\begin{lemma}[{\cite[Lemma 3.1]{GSW20}}]\label{coprime}
Let   $u_1,...,u_n$ be nonconstant  entire functions without zeros and $\mathbf{u}=(u_{1},\ldots,u_{n})$.
Let $K$ be a subfield of  $K_\mathbf{u}$ such that $u_j'/u_j\in K$, $1\le j\le n$, and $a'\in K$ for any $a\in K$.
Let $F$ be a nonconstant polynomial in $K[x_1,\dots,x_n]$ without  monomial factors and let
$F=F_1^{d_1}\cdots F_k^{d_k}$, where $k\ge 1$ and  $F_1,\hdots,F_k \in  K[x_1,\dots,x_n] $ are distinct irreducible  factors of $F$.  Suppose that $u_1,\dots,u_n$ are  multiplicatively independent modulo $K$.  Then the following two polynomials $\bar{F}:=F_1\cdots F_k$, 
and $\hat F_{\mathbf{u}}:=d_1D_{\mathbf{u}}(F_1)F_2\cdots F_k+\cdots +d_kF_1\cdots F_{k-1}D_{\mathbf{u}}(F_k)$
 are coprime in $K[x_1,\dots,x_n]$. 
\end{lemma}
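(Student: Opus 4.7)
The plan is to argue by contradiction. Suppose some irreducible $P \in K[\mathbf{x}]$ divides both $\bar F$ and $\hat F_{\mathbf{u}}$. Because $\bar F = F_{1}\cdots F_{k}$ factors into pairwise distinct irreducibles, $P$ must coincide (up to a unit in $K$) with some $F_{j}$; after relabeling, take $P = F_{1}$. Writing out
\[
\hat F_{\mathbf{u}} \;=\; d_{1}\,D_{\mathbf{u}}(F_{1})F_{2}\cdots F_{k} \;+\; \sum_{j=2}^{k} d_{j}\,F_{1}\cdots D_{\mathbf{u}}(F_{j})\cdots F_{k},
\]
every summand with $j\ge 2$ is already divisible by $F_{1}$, so $F_{1}\mid \hat F_{\mathbf{u}}$ forces $F_{1} \mid d_{1}\,D_{\mathbf{u}}(F_{1})F_{2}\cdots F_{k}$. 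Since $F_{1}$ is coprime to each $F_{j}$ with $j\ne 1$ and $d_{1}\ge 1$, I get the key divisibility $F_{1}\mid D_{\mathbf{u}}(F_{1})$ in $K[\mathbf{x}]$.

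Next I would observe from the explicit expression \eqref{Duexpression} that the monomial support of $D_{\mathbf{u}}(F_{1})$ is contained in the support of $F_{1}$, and in particular $\deg D_{\mathbf{u}}(F_{1})\le \deg F_{1}$. Combined with $F_{1}\mid D_{\mathbf{u}}(F_{1})$, this forces
\[
D_{\mathbf{u}}(F_{1}) \;=\; c\,F_{1} \qquad \text{for some } c\in K
\]
(possibly $c=0$). Here the hypotheses $u_{j}'/u_{j}\in K$ and $a'\in K$ for $a\in K$ are precisely what is needed to ensure $D_{\mathbf{u}}(F_{1})\in K[\mathbf{x}]$ so that the divisibility lives in the right ring.

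The decisive step is to convert this identity into a multiplicative relation among the $u_{j}$'s. Writing $F_{1}=\sum_{\mathbf{i}} a_{\mathbf{i}}\mathbf{x}^{\mathbf{i}}$ and comparing coefficients in $D_{\mathbf{u}}(F_{1}) = c F_{1}$ via \eqref{Duexpression}, each $\mathbf{i}$ in the support of $F_{1}$ satisfies $(a_{\mathbf{i}}\mathbf{u}^{\mathbf{i}})' = c\,(a_{\mathbf{i}}\mathbf{u}^{\mathbf{i}})$. Hence for any two indices $\mathbf{i},\mathbf{j}$ in the support, the quotient $(a_{\mathbf{i}}\mathbf{u}^{\mathbf{i}})/(a_{\mathbf{j}}\mathbf{u}^{\mathbf{j}})$ has vanishing logarithmic derivative, so it is a nonzero complex constant; rearranging yields $\mathbf{u}^{\mathbf{i}-\mathbf{j}} \in K$. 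The hypothesis that $u_{1},\ldots,u_{n}$ are multiplicatively independent modulo $K$ then forces $\mathbf{i}=\mathbf{j}$. Consequently $F_{1}$ has a single nonzero monomial, i.e.\ it is (a unit multiple of) a monomial, contradicting the assumption that $F$ has no monomial factors.

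The main obstacle I expect is this final passage from the polynomial divisibility $F_{1}\mid D_{\mathbf{u}}(F_{1})$ to a genuine relation $\mathbf{u}^{\mathbf{i}-\mathbf{j}}\in K$: all preceding steps amount to bookkeeping with the product rule \eqref{productrule} and unique factorization in $K[\mathbf{x}]$, whereas this step is where both the differential structure of $D_{\mathbf{u}}$ and the multiplicative independence hypothesis come into play simultaneously. The device of reading $D_{\mathbf{u}}(F_{1})=cF_{1}$ termwise as a common first-order ODE satisfied by all nonzero terms $a_{\mathbf{i}}\mathbf{u}^{\mathbf{i}}$ is the bridge between the two hypotheses.
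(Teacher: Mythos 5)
Your argument is correct, and since the paper cites this lemma from \cite{GSW20} without reproving it, the right comparison is to that source, where the proof proceeds by the same route: reduce to $F_{1}\mid D_{\mathbf{u}}(F_{1})$, use the support containment in \eqref{Duexpression} and a degree count to force $D_{\mathbf{u}}(F_{1})=cF_{1}$ with $c\in K$, read this coefficientwise as a common first-order ODE for the terms $a_{\mathbf{i}}\mathbf{u}^{\mathbf{i}}$, and derive $\mathbf{u}^{\mathbf{i}-\mathbf{j}}\in K$, contradicting multiplicative independence modulo $K$ unless $F_{1}$ is a monomial. The only implicit point worth noting is that passing from ``$a_{\mathbf{i}}\mathbf{u}^{\mathbf{i}}/(a_{\mathbf{j}}\mathbf{u}^{\mathbf{j}})$ is a nonzero complex constant'' to ``$\mathbf{u}^{\mathbf{i}-\mathbf{j}}\in K$'' uses that $\CC\subset K$, which holds in every application in the paper ($K=K_{\mathbf{u}}$ or $K=\CC(z)$).
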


The following lemma is a reformulation of \cite[Lemma 3.2]{GSW20} with $\alpha$ being an entire function.  
\begin{lemma}\label{dth_power_count}
Let   $u_1,...,u_n$ be nonconstant  entire functions without zeros and $\mathbf{u}=(u_{1},\ldots,u_{n})$.
Let $K$ be a subfield of  $K_\mathbf{u}$ such that $u_j'/u_j\in K$, $1\le j\le n$, and $a'\in K$ for any $a\in K$.
Let $d\ge 2$ be an integer. 
  Let $F\in K[x_1,\dots,x_n]$ and assume that $F$ has no nonmonomial repeated factors. 
Assume that $u_1,\dots,u_n$ are algebraically independent over $K$.  Let $\varepsilon >0$.
If  $ F(\mathbf{u})=\alpha g^d$ for some nonzero entire functions $g$ and $\alpha$, then 
\begin{equation*}
    N_{g}(0,r)\le_{\exc} \varepsilon \max_{1\le j\le n} \{T_{u_j}(r)\}.
\end{equation*}
\end{lemma}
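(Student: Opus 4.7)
The plan is to adapt the proof of \cite[Lemma 3.2]{GSW20}, now allowing $\alpha$ to be an arbitrary nonzero entire function rather than a specific unit. The strategy is to bound $N_{g}(0,r)$ by the gcd counting function of $F(\mathbf{u})$ and its derivative, and then invoke Theorem \ref{gcd_moving_unit}.

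First I would reduce to the case where $F$ has no monomial factor: if $F=\mathbf{x}^{\mathbf{i}_{0}}\tilde{F}$, then $\mathbf{u}^{\mathbf{i}_{0}}$ is a nowhere-vanishing entire function and $\tilde{F}(\mathbf{u})=(\mathbf{u}^{-\mathbf{i}_{0}}\alpha)g^{d}$ with $\mathbf{u}^{-\mathbf{i}_{0}}\alpha$ still entire, so the hypotheses carry over to $\tilde F$. Writing $F=cF_{1}\cdots F_{k}$ with $c\in K^{*}$ and $F_{1},\ldots,F_{k}\in K[\mathbf{x}]$ the distinct irreducible nonmonomial factors of $F$, I would observe that algebraic independence of $u_{1},\ldots,u_{n}$ over $K$ implies multiplicative independence modulo $K$, since any relation $\mathbf{u}^{\mathbf{j}}=a\in K$ with $\mathbf{j}\neq 0$ produces a nontrivial polynomial relation. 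Lemma \ref{coprime} (with all $d_{i}=1$) then yields that $\bar{F}:=F_{1}\cdots F_{k}$ and $\hat{F}_{\mathbf{u}}=D_{\mathbf{u}}(\bar{F})$ are coprime in $K[\mathbf{x}]$, and from $D_{\mathbf{u}}(F)=c'\bar{F}+c\hat{F}_{\mathbf{u}}$ together with $c\in K^{*}$ one deduces that $F$ and $D_{\mathbf{u}}(F)$ are also coprime in $K[\mathbf{x}]$. A short check shows $D_{\mathbf{u}}(F)$ is nonconstant: otherwise $(a_{\mathbf{i}}\mathbf{u}^{\mathbf{i}})'\equiv 0$ for each nonzero coefficient of $F$, and comparing two such nonconstant monomials forces $\mathbf{u}^{\mathbf{i}_{1}-\mathbf{i}_{2}}\in K$, contradicting multiplicative independence.

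The key step is a valuation comparison at each zero of $g$. Using \eqref{fuvalue} and differentiating $F(\mathbf{u})=\alpha g^{d}$ directly,
\begin{equation*}
D_{\mathbf{u}}(F)(\mathbf{u})=F(\mathbf{u})'=\alpha'g^{d}+d\alpha g^{d-1}g'.
\end{equation*}
At a point $z_{0}$ with $m:=v_{z_{0}}(g)\ge 1$ and $\ell:=v_{z_{0}}(\alpha)\ge 0$, one has $v_{z_{0}}(g')=m-1$, hence $v_{z_{0}}(d\alpha g^{d-1}g')=dm+\ell-1$ and $v_{z_{0}}(\alpha'g^{d})\ge dm+\ell-1$. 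Combined with $v_{z_{0}}^{+}(F(\mathbf{u}))=dm+\ell$, this gives
\begin{equation*}
\min\bigl\{v_{z_{0}}^{+}(F(\mathbf{u})),\,v_{z_{0}}^{+}(D_{\mathbf{u}}(F)(\mathbf{u}))\bigr\}\ge dm+\ell-1\ge m,
\end{equation*}
where the last inequality uses $d\ge 2$. Integrating over $|z_{0}|\le r$ then yields the pointwise-to-integral estimate $N_{g}(0,r)\le N_{\gcd}(F(\mathbf{u}),D_{\mathbf{u}}(F)(\mathbf{u}),r)$.

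Finally, since $F$ and $D_{\mathbf{u}}(F)$ are nonconstant coprime polynomials in $K[\mathbf{x}]$ and $u_{1},\ldots,u_{n}$ are algebraically independent over $K$, Theorem \ref{gcd_moving_unit} bounds $N_{\gcd}(F(\mathbf{u}),D_{\mathbf{u}}(F)(\mathbf{u}),r)$ by $\varepsilon\max_{j}T_{u_{j}}(r)$ outside an exceptional set of finite measure, completing the proof. The only genuine subtlety I anticipate is the valuation bookkeeping when $\alpha$ also vanishes at a zero of $g$: one must recognize that the product-rule term $d\alpha g^{d-1}g'$ loses exactly one order of $g$, so that the gcd valuation is $dm+\ell-1$, still $\ge m$ precisely because $d\ge 2$. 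Everything else is a direct combination of Lemma \ref{coprime} and Theorem \ref{gcd_moving_unit}.
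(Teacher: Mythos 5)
Your proof is correct and follows essentially the same route as the paper's: reduce to $F$ without monomial factors, apply Lemma~\ref{coprime} to get coprimality of $F$ and $D_{\mathbf{u}}(F)$, lower-bound $N_{\gcd}(F(\mathbf{u}),D_{\mathbf{u}}(F)(\mathbf{u}),r)$ by $N_g(0,r)$ from the identity $D_{\mathbf{u}}(F)(\mathbf{u})=g^{d-1}(d\alpha g'+\alpha'g)$, and close with Theorem~\ref{gcd_moving_unit}. (The paper pulls out $g^{d-1}$ and records the sharper bound $N_{\gcd}\ge N_{g^{d-1}}(0,r)=(d-1)N_g(0,r)$; your pointwise accounting with the extra $\ell=v_{z_0}(\alpha)$ is equivalent for the conclusion.) Your added checks---that algebraic independence of the $u_j$ over $K$ forces multiplicative independence modulo $K$, and that $D_{\mathbf{u}}(F)$ is nonconstant---are correct and are in fact needed to invoke Theorem~\ref{gcd_moving_unit} as stated; the paper passes over both silently.

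The one case your write-up does not cover is $F$ constant, i.e. $F=c\in K$ (which can also arise after stripping monomial factors). There Lemma~\ref{coprime} does not apply, but the conclusion is immediate: $\alpha g^d=c$ with $\alpha,g$ entire gives $N_g(0,r)\le N_{c}(0,r)\le T_c(r)+O(1)=o(\max_j T_{u_j}(r))$, which is how the paper disposes of it before the main argument. Adding this sentence makes your proposal complete.
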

 \begin{proof}
 Suppose that $ F(\mathbf{u})=\alpha g^d$ for some nonzero entire functions $g$ and $\alpha$. If $F=c\in K$, then $ N_{g}(0,r)\le N_{c}(0,r)=o( \max_{1\le j\le n} T_{u_j}(r))$ and thus the desired conclusion holds. Hence, we may assume $F\in K[x_1,\dots,x_n]$ is nonconstant.  
Since  $u_1,...,u_n$ are entire functions without zeros, we may remove the monomial factors from $F$ if necessary without changing our assertion.  Therefore we may further assume that $F$ has no monomial factors.
 
Let $F=F_1\cdots F_k$, where $F_1,\hdots,F_k \in  K[x_1,\dots,x_n] $ are distinct irreducible  factors of $F$.  

By \eqref{fuvalue}, we have  
\begin{align}\label{expression1}
 g^{d-1}(d\alpha g'+\alpha' g) =D_{\mathbf{u}}(F)(\mathbf{u}).
\end{align}
 Since $F$ and $D_{\mathbf{u}}(F)$ are coprime in $K[x_1,\dots,x_n]$ by
 Lemma \ref{coprime}, for a given $\varepsilon'>0$, Theorem \ref{gcd_moving_unit} gives 
    \begin{equation*}
        \begin{split}
            N_{\gcd}(F(\mathbf{u}),D_{\mathbf{u}}(F)(\mathbf{u}),r)\le_{\exc} \varepsilon' \max_{1\le j\le n} \{T_{u_j}(r)\}.
        \end{split}
    \end{equation*}
 
    On the other hand, \eqref{expression1} implies that
    \begin{equation}\label{gcdd} 
            N_{\gcd}(F(\mathbf{u}),D_{\mathbf{u}}(F)(\mathbf{u}),r)=N_{\gcd}(\alpha g^d, g^{d-1}(d\alpha g'+\alpha' g),r)\ge N_{g^{d-1}}(0,r),
           \end{equation}
 since $\alpha$ and $g$ are entire functions.
 Consequently, 
 $$N_{g}(0,r)\le_{\exc} \varepsilon' \max_{1\le j\le n} \{T_{u_j}(r)\}.$$
 
 \end{proof}
 
 \section{Proofs of  Theorem \ref{main_thm_1}}\label{thm1}
We first show the following.
\begin{theorem}\label{formain_thm1}
	Let $u_0,u_1,\dots,u_n$ be  nonconstant entire functions without zeros, i.e. $\mathbf{u}=(u_{0},\ldots,u_{n}):\mathbb C\to \mathbb G_m^{n+1}$. Let $K$ be a subfield of  the small field $K_\mathbf{u}$ w.r.t. $\mathbf{u}$ such that $u_j'/u_j\in K$, $ 0 \le j\le n$, and $a'\in K$ for any $a\in K$.  Let $d\ge 2$ be an integer.   Let $G$ be a nonconstant  homogeneous polynomials in $K[x_0,\hdots,x_n]$ with no repeated nonmonomial factors in $K[x_0,\hdots,x_n]$. 

If $u_0,\dots,u_n$ are algebraically   independent over $K$, then for any $\epsilon >0$	
\begin{align}\label{truncate_forthm_1}
 N_{G(\mathbf{u})}(0,r)-N^{(1)}_{G(\mathbf{u})}(0,r)\le \epsilon T_{\mathbf{u}}(r),
\end{align}
where $\mathbf{u}:=(u_0,u_1,\hdots,u_n)$.
	
If we assume furthermore that 
there is a $z_0\in\mathbb C$   such that all the coefficients of $G$ are holomorphic  at $z_0$  and  none of  the functions $G(1,0,\dots, 0),\dots,$ $G(0,\dots,0,1)$  vanishes at $z_0$, then
	\begin{align*}
	   N^{(1)}_{G(\mathbf{u})}(0,r)\ge_{\rm exc}  (\deg  G-\epsilon)\cdot T_{\mathbf{u}}(r). 
	\end{align*}
	\end{theorem}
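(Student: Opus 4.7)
For the first inequality, the strategy is to dominate the excess by a gcd counting function. The starting pointwise bound is that at any zero of a meromorphic $f$ of order $m\ge 1$, one has $v_z^+(f)-\min\{v_z^+(f),1\}=m-1\le v_z^+(f')$, which integrates to $N_f(0,r)-N_f^{(1)}(0,r)\le N_{\gcd}(f,f',r)$. To exploit the multivariate algebraic structure, I would dehomogenize: set $v_j:=u_j/u_0$ for $1\le j\le n$, $\mathbf{v}:=(v_1,\ldots,v_n)$, and $\tilde G(y_1,\ldots,y_n):=G(1,y_1,\ldots,y_n)$, so that $G(\mathbf{u})=u_0^{\deg G}\,\tilde G(\mathbf{v})$ with $u_0$ a unit. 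Since $u_0,\ldots,u_n$ are units, monomial factors of $G$ do not affect the zero divisor of $G(\mathbf{u})$, so one may assume $G$ is square-free with no monomial factors; these properties transfer to $\tilde G$, and the $v_j$ inherit algebraic independence over $K$, unit-ness, and $v_j'/v_j\in K$. Lemma \ref{coprime} then yields coprimality of $\tilde G$ and $D_\mathbf{v}(\tilde G)$ in $K[y_1,\ldots,y_n]$, and Theorem \ref{gcd_moving_unit} bounds $N_{\gcd}(\tilde G(\mathbf{v}),D_\mathbf{v}(\tilde G)(\mathbf{v}),r)\le_{\exc}\varepsilon\max_j T_{v_j}(r)$. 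Using $T_{v_j}(r)=T_{u_j/u_0}(r)\le T_\mathbf{u}(r)+O(1)$ (standard height comparison on $\PP^n$) completes the first inequality.

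For the second inequality, I would apply the moving-target Second Main Theorem \ref{SMTmoving} to the projective curve $\mathbf{u}:\CC\to\PP^n$ with the $n+2$ hypersurfaces $\{x_0,\ldots,x_n,G\}$. The condition at $z_0$ is precisely what is needed for weakly general position: any subsystem consisting of $\{x_i=0:i\ne j\}$ together with $\{G=0\}$ reduces $G$ to $G(e_j)\cdot x_j^{\deg G}$, which vanishes at $z_0$ only if $x_j=0$. Algebraic independence of $u_0,\ldots,u_n$ over $K$ yields algebraic non-degeneracy of $\mathbf{u}$ over $K$, and since each $u_j$ is a unit, $N^{(L)}_{u_j}(0,r)=0$. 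Theorem \ref{SMTmoving} thus produces $N^{(L)}_{G(\mathbf{u})}(0,r)\ge_{\exc}(\deg G)(1-\varepsilon)\,T_\mathbf{u}(r)$ for some integer $L$ depending on $n$, $\deg G$, and $\varepsilon$. To drop the truncation level from $L$ to $1$, I would combine this with the first inequality in the form $N^{(L)}_{G(\mathbf{u})}(0,r)\le N_{G(\mathbf{u})}(0,r)\le N^{(1)}_{G(\mathbf{u})}(0,r)+\varepsilon T_\mathbf{u}(r)$ and readjust $\varepsilon$.

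I expect the main technical obstacle to be the clean bookkeeping across the dehomogenization: verifying that squarefreeness and the absence of monomial factors pass from $G$ to $\tilde G$, that algebraic independence and the derivation properties $v_j'/v_j\in K$ descend from $\mathbf{u}$ to $\mathbf{v}$, and that $T_\mathbf{u}(r)$ dominates $\max_j T_{v_j}(r)$ up to an absorbable error. These are routine but form the bridge between the projectively stated hypotheses and the affine-variable lemmas (Lemma \ref{coprime} and Theorem \ref{gcd_moving_unit}) on which the argument rests.
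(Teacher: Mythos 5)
Your proposal is correct and follows essentially the same strategy as the paper's proof: dominate $N_{G(\mathbf{u})}(0,r)-N^{(1)}_{G(\mathbf{u})}(0,r)$ by a gcd counting function via Lemma \ref{coprime} and Theorem \ref{gcd_moving_unit}, then obtain the lower bound by applying the moving-target Second Main Theorem \ref{SMTmoving} to the $n+2$ polynomials $x_0,\ldots,x_n,G$ and absorbing the truncation-level discrepancy with the first inequality. The one genuine difference is the dehomogenization step: you pass from $(u_0,\ldots,u_n)$ and the homogeneous $G$ to $v_j=u_j/u_0$ and $\tilde G=G(1,y_1,\ldots,y_n)$, whereas the paper applies Lemma \ref{coprime} and Theorem \ref{gcd_moving_unit} directly in the $n+1$ variables $(u_0,\ldots,u_n)$, simply re-indexing. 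Your affine route has the modest advantage of matching the stated form of those lemmas exactly, and it makes the height comparison transparent: since $(1,v_1,\ldots,v_n)$ and $(u_0,\ldots,u_n)$ define the same map to $\PP^n$, Lemma \ref{gineq} immediately gives $\max_j T_{v_j}(r)\le T_{\mathbf{u}}(r)+O(1)$, which is exactly the quantity appearing on the right-hand side of Theorem \ref{gcd_moving_unit}. The paper instead writes down the bound in terms of $T_{\mathbf{u}}(r)$ directly, leaving the comparison with $\max_j T_{u_j}(r)$ implicit. You correctly verify that squarefreeness, absence of monomial factors, algebraic independence over $K$, $v_j'/v_j\in K$, and unit-ness all descend to the dehomogenized data, and your check of weakly general position (using $G(\mathbf{x})$ restricted to a coordinate axis equals $G(e_j)x_j^{\deg G}$) supplies a detail the paper's proof only asserts. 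In short, the route is the same; your bookkeeping across the homogeneous/affine divide is slightly more explicit.
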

\begin{proof} 
Let $z_0\in\mathbb C$.  If $v_{z_0}( G(\mathbf{u}))\ge 2$, then it follows from \eqref{fuvalue} that $v_{z_0}( D_{\mathbf{u}}(G)(\mathbf{u}))= v_{z_0}( G(\mathbf{u}))-1$.
Hence, 
$$
\min\{v_{z_0}^+(G(\mathbf{u})),v_{z_0}^+(D_{\mathbf{u}}(G)(\mathbf{u}))\}\ge v_{z_0}^+( G(\mathbf{u}))-\min\{1,v_{z_0}^+( G(\mathbf{u}))\}.
$$
Consequently,
\begin{align}\label{truncate}
N_{\gcd}(G(\mathbf{u}), D_{\mathbf{u}}(G)(\mathbf{u}),r)\ge N_{G(\mathbf{u})}(0,r)-N^{(1)}_{G(\mathbf{u})}(0,r).
\end{align}
By Lemma \ref{coprime}, $G$ and $D_{\mathbf{u}}(G)$ are comprime and hence we can apply Theorem \ref{gcd_moving_unit} to get 
\begin{align}\label{gcdUPB}
N_{\gcd}(G(\mathbf{u}), D_{\mathbf{u}}(G)(\mathbf{u}),r)\le_{\rm exc}  \frac{\epsilon}{2} T_{\mathbf{u}}(r)
\end{align}
for any $\epsilon>0$.  Then we have
\begin{align}\label{truncate7}
 N_{G(\mathbf{u})}(0,r)-N^{(1)}_{G(\mathbf{u})}(0,r) \le_{\exc}  \frac{\epsilon}{2}  T_{\mathbf{u}}(r).
\end{align}
This proves the first assertion.

On the other hand, the assumption   that $u_0,\dots,u_n$ are algebraically   independent over $K$ together with  the existence of $z_0$   such that all the coefficients of $G$ are holomorphic  at $z_0$  and   none of  the $G(1,0,\dots, 0),\dots, G(0,\dots,0,1)$  vanishes at $z_0$  allows us to use Theorem \ref{SMTmoving} with polynomials $ G $, $x_0,\hdots,x_n$ to obtain 
\begin{align}\label{applySMT}
 N_{G(\mathbf{u})}(0,r)\ge_{\rm exc} \deg G(1- \frac{\epsilon}{2\deg G} )  T_{\mathbf{u}}(r)=(\deg G- \frac{\epsilon}{2})  T_{\mathbf{u}}(r),
\end{align}
as  $u_i$ is entire without zero for every  $0\le i\le n$. Combining \eqref{applySMT} with \eqref{truncate7}, we obtain the second assertion.
\end{proof}

\begin{proof}[of Theorem \ref{main_thm_1}]
By Lemma \ref{u'/u}, $u_j'/u_j\in  K_{\mathbf{u}} $, $1\le j\le n$, and  by Lemma \ref{a'}  the condition  $a'\in K_{\mathbf{u}}$ for any $a\in  K_{\mathbf{u}}$ also holds.  Therefore,  the first assertion follows from applying both Proposition \ref{Borel1co} and Theorem \ref{formain_thm1} with   $K=K_{\mathbf{u}}$. To deduce the second assertion from Theorem \ref{formain_thm1} applied with   $K=K_{\mathbf{u}}$, we only have to  note that  
$G(1,0,\dots, 0),\dots, G(0,\dots,0,1)$  and the coefficients of $G$ are meromorphic functions   and thus  there is a $z_0\in \mathbb C$  such that all the coefficients of $G$ are holomorphic  at $z_0$  and   none of the $G(1,0,\dots, 0),\dots, G(0,\dots,0,1)$ vanishes at $z_0$ since we assume that  none of them is identically zero.   Finally,  we note that the assumption that $u_0,\dots,u_n$ are entire functions with no zeros of finite order implies that $u_j'/u_j\in \CC(z)$, $1\le j\le n$, and  that $a'\in \CC(z)$ for any $a\in  \CC(z)$; also,  by Proposition \ref{Borel3}, the assumption that  $u_0,\dots,u_n$ are multiplicatively    independent modulo  $\mathbb C$ implies that $u_0,\dots,u_n$  multiplicatively    independent modulo  $\mathbb C(z)$;  Therefore  the third assertion follows from applying  previous arguments with $K=\CC(z)$.   
 \end{proof}

 \section{Proofs of  Theorem  \ref{GG_conj} }\label{thm2}
\begin{proof}[of Theorem \ref{GG_conj}]
Letting $K\subset K_{\mathbf{f}}$ be a subfield, we consider the following arguments.

Let $z_{0}\in\mathbb{C}$   such that  all the coefficients of  all $F_i$, $1\le i\le n+1$ are  holomorphic at $z_0$ and the zero locus of  $F_i$,  $1\le i\le n+1$, evaluating at $z_0$  intersect transversally.  We note these conditions imply that $z_0$ is not a common zero of  the coefficients of $F_i$, for each $1\le i\le n+1$.

For each polynomial $G=\sum_{I}a_{I}{{\bf x}^{I}}\in K[x_{0},\hdots,x_{n}]$, where
$I=(i_{0},\hdots,i_{n})\in\mathbb{Z}_{\ge0}^{n+1}$ and ${\bf x}^{I}=x_{0}^{i_{0}}\cdots x_{n}^{i_{n}}$, we denote by $G({z_{0}}):=\sum_{I}a_{I}(z_{0}){{\bf x}^{I}}$ if all the coefficients of $G$ are holomorphic at $z_0$ and do not vanish simultaneously at  $z_0$.
Denote
by $D_{i}\subset\PP^{n}(\overline{K})$ the divisor (over
$K$) defined by $F_{i}$, by $D_{i}(z_{0})\subset\PP^{n}(\mathbb{C})$
the divisor defined by $F_{i}(z_{0})$, and let $D\coloneqq D_{1}+\cdots+D_{n+1}$.
Since $D_{i}(z_{0})$, $1\le i\le n+1$, intersect transversally,
they are in general position; thus the set of polynomials  $F_{i}$,
$1\le i\le n+1$, is in  weakly general position. 
Then Proposition \ref{HilbertN} implies that the
only $(x_{0},\ldots,x_{n})\in\overline{K}^{n+1}$ with
$F_{i}(x_{0},\ldots,x_{n})=0$ for each $1\le i\le n+1$ is $(0,\ldots,0)$.
Thus the association \textcolor{black}{$P\mapsto[F_{1}^{a_{1}}(P):\hdots:F_{n+1}^{a_{n+1}}(P)]$,
where $a_{i}\coloneqq{\rm lcm}(\deg F_{1},\hdots,\deg F_{n+1})/\deg F_{i}$,
defines a morphism} $\pi:\PP^{n}(\overline{K})\to\PP^{n}(\overline{K})$
over $K$.

It is well-known that the ramification divisor of $\pi$ is  the zero locus of the determinant $J\in K[x_{0},\hdots,x_{n}]$  
of the Jacobian matrix 
$$
\big(\frac{\partial F_i^{a_i}}{\partial x_j}\big)_{1\le i\le n+1, 0\le j\le n}
$$ 
of $\pi$.
Our plan is to show that there exists an irreducible factor $\tilde G$ of $J$ in $K_{\mathbf{f}}[x_{0},\hdots,x_{n}]$ such that the corresponding moving hypersurfaces of $\tilde G$, $F_{1},\ldots,F_{n+1}$ are in weakly general position.
Furthermore, we will show that  $\tilde G(\mathbf{f})$ has very few zeros and hence conclude that $\mathbf{f}$ is algebraically degenerate by applying Theorem \ref{SMTmoving}  for $\tilde G$, $F_{1},\ldots,F_{n+1}$.

For this purpose, we look at the specialization of $J$ at $z_0$.  
Denote by 
$\pi|_{z_{0}}=[F_{1}^{a_{1}}(z_{0}) :\hdots:F_{n+1}^{a_{n+1}}(z_{0}) ]:\PP^{n}(\mathbb{C})\rightarrow\PP^{n}(\mathbb{C})$, which is a morphism since $F_1(z_0),\hdots,F_{n+1}(z_0)$ are in general position.
Then $J(z_{0})\in\mathbb{C}[x_{0},\hdots,x_{n}]$
is the determinant of the Jacobian matrix of $\pi|_{z_{0}}$. 
Observing that $J$ has a factor $G$ which  denotes  the determinant of  
\begin{align*}
M:= \big(\frac{\partial F_i }{\partial x_j}\big)_{1\le i\le n+1, 0\le j\le n}
 \end{align*} 
  in ${\color{blue}K }[x_{0},\hdots,x_{n}]$, we see that $G(z_0)$ is a factor of $J(z_0)$.   We note that  $G(z_{0})$ is not a constant 
since each $F_i$ is homogeneous and reduced and $\sum_{i=1}^{n+1}F_i\ge n+2$.   We claim that $[G(z_0)=0]$ (the zero locus of $G(z_0)$), $D_1(z_0),\hdots,D_{n+1}(z_0)$ are  is in general position (in $\mathbb P^n(\CC)$). To prove this, it suffices to show that $G(z_0)$ does not vanish at any intersection point of any $n$ divisors among $D_1(z_0),\hdots,D_{n+1}(z_0)$.  By rearranging the index, it suffices to consider that $P\in \cap_{i=1}^n D_i(z_0)$ and show that $G(z_0)(P)\ne0$.  Since the $D_i(z_0)$'s are in general position,  we see that $F_{n+1}(z_0)(P)\ne 0$. 
Using the Euler formula 
$$
\sum_{j=1}^{n}\frac{\partial F_i(z_0)}{\partial x_j}x_j=\deg F_i(z_0) \cdot F_i(z_0), 
$$
we obtain 
\begin{equation*}
\begin{split}
        x_0G(z_0) =\det \begin{pmatrix}
        d_1 F_1(z_0)& \frac{\partial F_1(z_0)}{\partial x_1} & \cdots & \frac{\partial F_1(z_0)}{\partial x_n} \\
        \vdots & \vdots & \vdots & \vdots\\
        d_{n+1}F_{n+1}(z_0) & \frac{\partial F_{n+1}(z_0)}{\partial x_1} & \cdots & \frac{\partial F_{n+1}(z_0)}{\partial x_n} 
    \end{pmatrix},
\end{split}
\end{equation*}
and hence
$$
 x_0(P)(G(z_0))(P)=(-1)^{n+1} d_{n+1}(F_{n+1}(z_0))(P)\det \left( \frac{\partial F_i(z_0)}{\partial x_j}(P) \right)_{1\le i, j\le n},
$$
where $d_i:=\deg F_i=\deg (F_i(z_0))$.
Since $D_1(z_0),\hdots,D_{n+1}(z_0)$ intersect transversally, we see that $\det \left( \frac{\partial F_i(z_0)}{\partial x_j}(P) \right)_{1\le i, j\le n}\ne 0$.  Then $G(z_0)(P)\ne 0$ as  $F_{n+1}(z_0)(P)\ne 0$.  This proves our claim; it actually shows that every irreducible factor of  $G(z_0)$ is in general position with  $F_i(z_0)$, $1\le i\le n+1$. Hence, there is a nonconstant irreducible factor $\tilde G$ of $G$ (and hence of $J$)  in $K[x_{0},\hdots,x_{n}]$ such that $\tilde G$,  $F_{1},\ldots,F_{n+1}$
is in weakly general position.
 
We note that the finite map
$\pi:\PP^{n}(\overline{K})\to\PP^{n}(\overline{K})$
is defined over $K$; by the definition of $D$, it induces a finite morphism  $\tilde \pi:=[F_{1}^{a_{1}}/F_{n+1}^{a_{n+1}}:\hdots:F_{n}^{a_{n}}/F_{n+1}^{a_{n+1}}]: \PP^{n}\setminus {\rm Supp}(D)\mapsto\mathbb{G}_{m}^{n}$, which is a morphism between affine varieties over  $K$. Denote by $Y$ the zero locus of $\tilde G$ in $\PP^{n}\setminus {\rm Supp}(D)$; by our construction, $Y$ is contained in the ramification divisor of $\tilde\pi$.
Then there exists an irreducible polynomial $A\in K[y_1,\hdots,y_n]$ such that the vanishing order of $\tilde\pi ^*A$ along $Y$ is at least 2.  Let $\tilde A$ be the homogenization of $A$.  Then this construction gives
$\pi^*\circ \tilde A=\tilde G^2H$ for some $H\in  K[x_{0},\hdots,x_{n}]$. 
Now let $\mathbf{f}=(f_{0},\hdots,f_{n}):\CC\to\PP^{n}$
be a holomorphic map, where $f_{0},\hdots,f_{n}$ are entire functions
without common zeros, such that $\mathbf{u}\coloneqq\pi(\mathbf{f})=(F_{1}(\mathbf{f})^{a_{1}},\ldots,F_{n+1}(\mathbf{f})^{a_{n+1}})$
is a tuple of entire functions without zeros. From the equality $\tilde A(\mathbf{u})=(\pi^*\circ \tilde A)(\mathbf{f})=\tilde G^2(\mathbf{f})H(\mathbf{f})$, it follows that for each $z\in\mathbb{C}$ with $v_{z}(\tilde G(\mathbf{f}))>0$, i.e. $\mathbf{f}(z)\in Y$,
we have 
\begin{align}\label{poleH}
v_{z}(\tilde A(\mathbf{u}))\ge 2v_{z}(\tilde G(\mathbf{f}))+\min\{0,v_{z}( H(\mathbf{f}))\}\ge v_{z}(\tilde G(\mathbf{f}))+1+\min\{0,v_{z}( H(\mathbf{f}))\}.
\end{align}
Since  $f_{0},\hdots,f_{n}$ are entire functions, the nonnegative number $-\min\{0,v_{z}( H(\mathbf{f}))\}$ is bounded by the number of poles of the coefficients of $H$ at $z$.   Since  $N_{\beta}(\infty,r)\le T_{\beta}(r)+O(1)={\rm o}(  T_{\mathbf{f}}(r))$ for any $\beta\in K$, it follows from \eqref{poleH}  that  
\[
N_{\tilde G(\mathbf{f})}(0,r)\le N_{A(\mathbf{u})}(0,r)-N_{A(\mathbf{u})}^{(1)}(0,r)+{\rm o}(  T_{\mathbf{f}}(r)).
\]

 Now consider in the cases where $K=K_{\mathbf{f}}$ and where $K=\CC(z)$. As noticed in the proof of Theorem \ref{GG_conj},  the property required for $(K,\mathbf{u})$ in Theorem \ref{formain_thm1} is satisfied. By  \eqref{truncate_forthm_1} in Theorem \ref{formain_thm1}  and that
$T_{\mathbf{u}}(r)=O(T_{\mathbf{f}}(r))$, we have 
\begin{align}
N_{\tilde G(\mathbf{f})}(0,r)\le_{\rm exc}\epsilon T_{\mathbf{f}}(r)\label{RamifiedG}
\end{align}
 for every $\epsilon>0$.
 
Finally, since the set of polynomials $\tilde G$,  $F_{1},\ldots,F_{n+1}$
is in weakly general position, Theorem \ref{SMTmoving}
shows that if  $\mathbf{f}$ is algebraically nondegenerate over $K$,
then  we reach the contradiction that for every $\epsilon>0$  
\begin{align*}
\left(1-\epsilon\right)T_{\mathbf{f}}(r)\le\frac{1}{\deg \tilde G}N_{\tilde G(\mathbf{f})}(0,r)+\sum_{j=1}^{n+1}\frac{1}{\deg F_{j}}N_{F_{j}(\mathbf{f})}(0,r)\le_{\rm exc}\epsilon T_{\mathbf{f}}(r),
\end{align*}
where the second inequality uses \eqref{RamifiedG} and the hypothesis
that each entire function $F_{i}(\mathbf{f})$ has no zeros. 
 This shows
that $\mathbf{f}$ is algebraically degenerate over $K$,  and proves the first part of the desired conclusions when the case where $K=K_{\mathbf{f}}$ is considered.

It remains to consider when $F_i\in\mathbb C[x_{0},\hdots,x_{n}]$ and $\mathbf f$ is of finite order.  Our previous argument for the case where $K=\mathbb C(z)$ shows that  $\mathbf{f}$ is algebraically degenerate over $\mathbb C(z)$.  By considering the transcendence degree of the extension generated by $F_{1}(\mathbf{f}),\ldots,F_{n+1}(\mathbf{f})$ over $\CC(z)$, we see that they are algebraic dependent over $\mathbb C(z)$.  Since they are units of finite order, Proposition \ref{Borel3} implies that $F_{1}(\mathbf{f}),\ldots,F_{n+1}(\mathbf{f})$ are multiplicatively  dependent modulo  $\mathbb C$.  As the coefficients of all $F_i$ are in $\mathbb C$, it implies that $\mathbf{f}$ is algebraically degenerate over $\mathbb C$. 
 \end{proof}

\section{Proof of Theorem \ref{mainthm3}}\label{thm3}
  The proof of Theorem \ref{mainthm3} is an adaption and generalization of the proof of   \cite[Theorem 3]{corvaja2013algebraic} to the complex situation.  We reformulate and generalize the arguments in  \cite{corvaja2013algebraic} into the  following two lemmas to complete the proof.

\begin{lemma}
\label{mainlemma} Let $d$ and $q$ be positive integers. Let $\mu_{1},\hdots,\mu_{n}\in\mathbb{C}^{*}$
and $\mathbf{u}=(u_{1},\hdots,u_{n})=(e^{\mu_{1}z^{q_{1}}},\hdots,e^{\mu_{n}z^{q_{n}}})$
with $ q=q_1= q_2=\cdots=q_{\ell}> q_{\ell+1}\ge\cdots\ge q_n  \ge 1$ for some $1\le\ell\le n$. Assume that $u_{1},\hdots,u_{n}$
are  algebraic  independent  over $\CC(z)$; this means that $\CC(z)[u_{1}^{\pm 1},\hdots,u_{n}^{\pm 1}]$ can be regarded as the ring of Laurent polynomials in variables $u_{1},\hdots,u_{n}$ over $\CC(z)$. Let $A_{i}\in \mathbb{C}(z)[u_{1}^{\pm 1},\hdots,u_{n}^{\pm 1}]$,
$0\le i\le d-1$. Assume that the polynomial  $F(Y):=Y^{d}+A_{d-1}Y^{d-1}+\hdots+A_{1}Y+A_{0}\in \mathbb{C}(z)[u_{1}^{\pm 1},\hdots,u_{n}^{\pm 1}][Y]$
is irreducible  and  its discriminant $\Delta\in\mathbb{C}(z)[u_{1}^{\pm 1},\hdots,u_{n}^{\pm 1}]$ is square-free.
Suppose that there exists an entire function $g$ such that $F(g)=0$.
 Then there exists  $Q\in\mathbb{C}(z)][u_{\ell+1}^{\pm 1},\dots,u_n^{\pm 1}]$
such that 
\begin{align*}
\Delta=Qu_{1}^{m_{1}}\cdots u_{\ell}^{m_{\ell}},
\end{align*}
where $m_i\in\mathbb Z$ for $1\le i\le \ell$.
\end{lemma}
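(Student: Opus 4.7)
The strategy adapts \cite[Theorem 3]{corvaja2013algebraic} to the Nevanlinna setting, combining the resultant identity for $F$ and $F_Y$ with Borel's truncated second main theorem (Theorem \ref{trunborel}). Since $F$ is monic and separable (as $\Delta\neq 0$), $F$ and $F_Y$ are coprime in $K[Y]$ with $K:=\CC(z)(\mathbf u)$, and the resultant identity yields $U, V\in\CC(z)[\mathbf u^{\pm 1}][Y]$ with $\deg_Y V<d$ satisfying $UF+VF_Y=\pm\Delta$. Substituting $Y=g$ and using $F(g)=0$ gives
\begin{equation*}
V(g,\mathbf u)\,F_Y(g,\mathbf u)=\pm\Delta(\mathbf u)
\end{equation*}
as meromorphic functions on $\CC$. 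Factor $\Delta = u_1^{m_1}\cdots u_n^{m_n}\tilde\Delta(\mathbf u)$ with $\tilde\Delta\in\CC(z)[u_1,\ldots,u_n]$ having no monomial factors; then $\tilde\Delta$ is also square-free.

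The first main step is to establish $N_{\tilde\Delta(\mathbf u)}(0,r)=o(T_\mathbf u(r))$. For $d=2$, the computation $(2g+A_1)^2 = A_1^2-4A_0 = \Delta$ (using $g^2+A_1g+A_0=0$) shows $\Delta=F_Y(g)^2$, so $\tilde\Delta(\mathbf u(z))$ is a perfect square after absorbing the monomial part via its canonical entire square root $M^{1/2} := \prod_{i=1}^{n} e^{m_i\mu_i z^{q_i}/2}$. After clearing poles with a polynomial $p(z)\in\CC[z]$, we obtain an entire $h$ with $h^2 = p^2\tilde\Delta(\mathbf u)$. Applying Lemma \ref{dth_power_count} (with $d=2$, $K=\CC(z)$, $\alpha=1$, $g_0=h$) yields $N_h(0,r)\leq_{\exc}\varepsilon\,T_\mathbf u(r)$, hence $N_{\tilde\Delta(\mathbf u)}(0,r) = 2N_h(0,r)+O(\log r) = o(T_\mathbf u(r))$. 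For general $d$, the analogous bound follows from the norm identity $\Delta = N_{K(g)/K}(F_Y(g))$ combined with a Puiseux-local analysis at zeros of $\Delta(\mathbf u(z))$, exploiting entireness of $g$.

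For the second main step, suppose for contradiction that $\tilde\Delta$ is not a Laurent monomial in $u_1,\ldots,u_\ell$ times a polynomial in $u_{\ell+1},\ldots,u_n$. Then $\tilde\Delta = \sum_I c_I(z)\mathbf u^I$ contains two monomials whose multi-indices differ in some coordinate $j\leq\ell$. Clearing denominators with a polynomial $p(z)$ and setting $f_0:=-p\tilde\Delta(\mathbf u)$ and $f_i:=(pc_{I_i})(z)\mathbf u^{I_i}$, we obtain a relation $\sum f_i=0$ among entire functions with no vanishing proper subsum (by algebraic independence of $\mathbf u^I$ over $\CC(z)$, combined with $\tilde\Delta\neq 0$). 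Theorem \ref{trunborel} gives $T_\mathbf f(r) \leq_{\exc} \sum_i N^{(k-1)}_{f_i}(0,r) + O(\log T_\mathbf f(r))$, where $k$ is the number of terms. Each $f_i$ with $i\geq 1$ has zeros only from its polynomial coefficient, giving $N^{(k-1)}_{f_i}=O(\log r)$, and $N^{(k-1)}_{f_0}\leq N_{\tilde\Delta(\mathbf u)}(0,r)+O(\log r) = o(T_\mathbf u(r))$ by Step 1. On the other hand, the differing $(u_1,\ldots,u_\ell)$-parts of two monomials force $T_\mathbf f(r)\gtrsim r^q\asymp T_\mathbf u(r)$, yielding a contradiction. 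Hence the monomials of $\tilde\Delta$ share their $(u_1,\ldots,u_\ell)$-exponents; combined with the no-monomial-factor condition, $\tilde\Delta\in\CC(z)[u_{\ell+1}^{\pm 1},\ldots,u_n^{\pm 1}]$, giving the claimed form of $\Delta$.

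The principal obstacle is extending the first main step from $d=2$ to $d\geq 3$: the clean identity $\Delta=F_Y(g)^2$ of the quadratic case becomes the norm relation $\Delta=N_{K(g)/K}(F_Y(g))$, which no longer directly exhibits $\tilde\Delta(\mathbf u(z))$ as a perfect square, and one needs finer algebraic/Puiseux input to force $N_{\tilde\Delta(\mathbf u)}=o(T_\mathbf u)$ in the general setting.
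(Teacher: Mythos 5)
Your proposal has a genuine gap, and it is exactly the one you flag at the end: the first main step is only carried out for $d=2$, and the proposed route for $d\ge 3$ (norm identity plus ``Puiseux-local analysis'') is not an argument. The missing idea is an elementary symmetric-function identity that the paper exploits to reduce every $d$ to the $d=2$ situation. Set $H:=F(Y)/(Y-g)=\prod_{i>1}(Y-\alpha_i)$ and let $G:=\operatorname{disc}_Y(H)=\prod_{1<i<j\le d}(\alpha_i-\alpha_j)^2$; since $F'(g)=\prod_{i>1}(g-\alpha_i)$, one has the clean factorization $\Delta=\prod_{i<j}(\alpha_i-\alpha_j)^2=F'(g)^2\,G$ for all $d$. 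Writing $\Delta=u_1^{m_1}\cdots u_n^{m_n}A(\mathbf u)$ with $A$ square-free then gives $A(\mathbf u)=F'(g)^2\gamma$ with $\gamma$ entire (after clearing $\CC(z)$-denominators), which is precisely the input shape ``$F(\mathbf u)=\alpha g^d$ with $d=2$'' that Lemma~\ref{dth_power_count} accepts. Your resultant identity $UF+VF_Y=\pm\Delta$ only yields $\Delta=V(g)F_Y(g)$, which is linear rather than quadratic in $F'(g)$ and is therefore useless for Lemma~\ref{dth_power_count}; for $d=2$ you rescue this by direct computation, but for $d\ge 3$ you have no substitute.

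There is a second, compounding issue in what you try to bound. You want $N_{\tilde\Delta(\mathbf u)}(0,r)=o(T_{\mathbf u}(r))$, but from $\tilde\Delta(\mathbf u)=u^{-m}F'(g)^2 G$ one only controls the $F'(g)$ factor; $G$ may well contribute a large number of zeros, so $N_{\tilde\Delta(\mathbf u)}(0,r)$ is not the right quantity and your Step~2 does not have the input it needs. The paper instead establishes $N_{F'(g)}(0,r)\le_{\exc}\epsilon r^q+o(r^q)$, takes a minimal polynomial relation $R(u_1,\dots,u_n,F'(g))=0$ over $\CC[z]$, and applies Theorem~\ref{trunborel} to that relation (not to the tautological monomial expansion of $\tilde\Delta$). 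The truncated SMT constrains the exponent vectors appearing in $R$ and shows that $u_1^{c_1}\cdots u_\ell^{c_\ell}F'(g)^{c_0}$ is algebraic over $\CC(z)(u_{\ell+1},\dots,u_n)$. Finally, one runs the same conclusion over all Galois conjugates $\alpha_i$ of $g$ and multiplies, using $\Delta=\prod_{i=1}^d F'(\alpha_i)$, to deduce that a monomial multiple of $\Delta^{c_0}$ lies in $\CC[z][u_{\ell+1}^{\pm1},\dots,u_n^{\pm1}]$. This passage from $F'(g)$ to $\Delta$ via the product over conjugates is also absent from your proposal and is not optional: it is what converts information about the single root $g$ into information about the discriminant.
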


 The proof of Lemma \ref{mainlemma} will be given at the end of this section.
 
\begin{lemma}
\label{separate} Let $k$ be a field of characteristic zero.  Suppose that  $f(Y)\in  k[U_{1}^{\pm 1},\hdots,U_{n}^{\pm 1}][Y]$ is a monic and irreducible polynomial in $Y$ such that its discriminant $\Delta\in k[U_{1}^{\pm 1},\hdots,U_{n}^{\pm 1}]$ (with respect to $Y$) euqals $QU_{1}^{m}$ for some  $Q\in  k[U_{2}^{\pm 1},\hdots,U_{n}^{\pm 1}] $ and $m\in\mathbb{Z}$. Then we have
\begin{align*}
f(U_{1},\hdots,U_{n},Y)=U_{1}^{s}P(U_{2},\hdots,U_{n},U_{1}^{t}Y+A(U_{1},\hdots,U_{n})),
\end{align*}
where $P\in k[U_{2}^{\pm1},\hdots,U_{n}^{\pm1},W]$, $A(U_{1},\hdots,U_{n})\in k[U_{1}^{\pm1},\hdots,U_{n}^{\pm1}]$ 
and    $s,t\in\mathbb{Z}$. 
\end{lemma}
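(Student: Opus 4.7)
The plan is to describe the roots of $f$ via local analysis at $U_1=0$ and $U_1=\infty$, then use a translation-plus-rescaling of $Y$ to collapse their dependence on $U_1$, reading off the desired decomposition from the result.

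Setting $K_0=k(U_2,\ldots,U_n)$ and viewing $f\in K_0(U_1)[Y]$, the hypothesis $\Delta=QU_1^m$ says the discriminant has no zeros on $\mathbb{G}_{m,K_0}$, so the degree-$d$ cover $\operatorname{Spec}(K_0[U_1^{\pm1}][Y]/(f))\to\mathbb{G}_{m,K_0}$ is finite \'etale. I would first show that the Newton polygon of $f$ at $U_1=0$ consists of a single segment: a multi-slope polygon would yield, via the standard factorization of $f$ over $K_0((U_1))$ into slope components, resultant contributions that force $\Delta=\prod_{i<j}(\alpha_i-\alpha_j)^2$ to contain more than one $U_1$-monomial, contradicting $\Delta=QU_1^m$. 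The same analysis at $U_1=\infty$ produces a common valuation there.

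Using irreducibility of $f$ together with the Kummer-theoretic classification of finite \'etale covers of $\mathbb{G}_{m,\bar K_0}$ in characteristic zero, one extracts an integer $t$ and a Laurent polynomial $A\in k[U_1^{\pm1},\ldots,U_n^{\pm1}]$ (playing the role of a Tschirnhaus-type translation that kills the trace of the roots) such that the substitution $W=U_1^tY+A$ transforms each root $\alpha_i$ of $f$ into an element $\tilde\alpha_i\in\overline{K_0[U_1^{\pm1}]}$ of zero valuation at both $U_1=0$ and $U_1=\infty$. Such an element has trivial divisor on the \'etale cover of $\mathbb{G}_{m,\bar K_0}$ and hence lies in $\bar K_0$. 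Because the $\tilde\alpha_i$ are Galois-conjugate over $K_0$, the polynomial $P(W):=\prod_i(W-\tilde\alpha_i)$ has coefficients in $K_0$; integrality of $f$ together with normality of $k[U_2^{\pm1},\ldots,U_n^{\pm1}]$ then places these coefficients inside $k[U_2^{\pm1},\ldots,U_n^{\pm1}]$. Unwinding the change of variables yields $f=U_1^sP(U_1^tY+A)$ with $s=-dt$, as required.

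The main technical obstacle is verifying that the common root-valuation at $U_1=0$ is genuinely an integer: the Newton-polygon step alone gives only a slope in $\tfrac1d\mathbb{Z}$, and promoting it to $\mathbb{Z}$ requires the full interaction between irreducibility of $f$ and the monomial structure of $\Delta$, by way of the tame fundamental group of $\mathbb{G}_{m,\bar K_0}$. Once this integrality of $t$ is in hand, the rescaling, the descent of the symmetric functions to $k[U_2^{\pm1},\ldots,U_n^{\pm1}]$, and the final bookkeeping are routine.
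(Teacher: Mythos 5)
Your approach (read $f$ as an \'etale cover of $\mathbb{G}_{m,K_0}$, use Newton polygons at $0$ and $\infty$, then invoke the classification of finite \'etale covers of $\mathbb{G}_m$ in characteristic zero) is the same circle of ideas the paper invokes: the paper simply cites Eq.~(12) in the proof of Corvaja--Zannier's Theorem~3, whose content is that an irreducible monic $F\in\mathbf K[V,Y]$ with discriminant $cV^m$ over an algebraically closed field $\mathbf K$ can be brought to the Kummer form $(Y-a(V))^e-bV^{s}$ with $\gcd(s,e)=1$. So far so good.

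The issue is the step you yourself flag as the ``main technical obstacle'': promoting the common slope $t$ to an integer. That gap is genuine, and the tame-fundamental-group heuristic you invoke does not close it -- it points the other way. Write $f=f_1\cdots f_r$ for the irreducible factorization over $\overline{K_0}(U_1)$. Each $f_i$ has unit discriminant over $\overline{K_0}[U_1^{\pm1}]$, hence by the very Kummer classification you appeal to, $f_i=(Y-a_i)^{e_i}-b_iU_1^{s_i}$ with $\gcd(s_i,e_i)=1$. The translated roots $\alpha-a_i$ then have $U_1$-valuation $s_i/e_i$, which is a genuine fraction whenever $e_i>1$. Multiplying by an integer power $U_1^t$ can never bring such a root into $\overline{K_0}$. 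In other words, the fundamental group of $\mathbb{G}_{m,\overline{K_0}}$ being $\widehat{\mathbb{Z}}$ is precisely what forces the denominator $e_i$, rather than what removes it. For the substitution $W=U_1^tY+A$ with $t\in\mathbb{Z}$ to yield constants $\widetilde\alpha_i\in\overline{K_0}$, one needs every $e_i=1$, i.e.\ one needs $f$ to split into linear factors over $\overline{K_0}(U_1)$, and neither your sketch nor the stated hypotheses of Lemma~\ref{separate} supply that. Indeed, the already-simplest case $n=1$, $f=Y^2-U_1$ over $k=\mathbb{C}$ has discriminant $4U_1$ (so $Q=4$, $m=1$), yet forcing $U_1^sP(U_1^tY+A)=Y^2-U_1$ with $P\in k[W]$ monic and $A\in k[U_1^{\pm1}]$ gives $s=-2t$, $A=-p/2$ constant, and then $q-p^2/4=-U_1^{1+2t}$, which requires $t=-1/2\notin\mathbb{Z}$. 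So the step you defer as ``routine'' is exactly where the argument breaks, and any correct proof must either impose and exploit a hypothesis guaranteeing that $f$ splits completely over $\overline{K_0}(U_1)$, or else work with the true Kummer form $(Y-a(V))^e-bV^s$ directly (as Corvaja--Zannier do) without trying to force an integer rescaling of $Y$ that lands the roots in $\overline{K_0}$.
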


We note that Lemma \ref{separate} is verified in the middle of the proof of \cite[Theorem 3]{corvaja2013algebraic} for $n=2$  (See \cite[Eq. (12)]{corvaja2013algebraic}.) based on the following claim:
{\it Let  $\mathbf K$ be an algebraically closed field of characteristic 0 and let $F\in  \mathbf K[V,Y]$ be an irreducible polynomial monic and of degree $e$ in $Y$ such that its discriminant with respect to $Y$ is a constant times a power of $V$.   Then $g(V,Y)=(Y-a(V))^e-bV^s$ where $a\in \mathbf K[V]$, $b\in \mathbf K^*$ and $s$ is an integer prime to $e$. 
It is clear that their arguments extend naturally to all positive integer $n$ by taking $\mathbf K$ to be an algebraic closure of $k$ if $n=1$; and an algebraic closure of $k( U_{2},\hdots,U_{n})$ if $n\ge 2$.
}
We will not repeat the proof here.

\begin{proof}[of Theorem \ref{mainthm3}]
 We prove the desired conclusion by induction on $n$. In the case where $n=0$, i.e. each $A_i$ is in $\CC(z)$, the assumption $F(g)=0$ says that the entire function $g$ is algebraic over $\CC(z)$ and hence  $T_{g}(r)=O(\log r)$.
By Theorem \ref{rational},  we conclude that $g\in\CC(z)$ and thus $g\in\CC[z]\subset\mathcal E_0$. This settles down the base case of the induction.

Suppose that $n>0$.  We may rearrange the $u_i$   such that
$$
 q= q_1=\cdots=q_{i_1}>q_{i_1+1}=\cdots=q_{i_2}>\cdots >q_{i_b+1}=\hdots=q_n.
$$
 As $u_{1},\hdots,u_{n}$ are multiplicatively
independent modulo $\mathbb C$, it follows from Proposition   \ref{Borel3} that they are
algebraically independent over $\CC(z)$. Note that $\mathbb{C}(z)[u_{1}^{\pm 1},\hdots,u_{n}^{\pm 1}] \subset \mathcal K_q$ and thus our hypothesis implies that $F(Y)$ is irreducible over $\mathbb{C}(z)[u_{1}^{\pm 1},\hdots,u_{n}^{\pm 1}]$.
By Lemma \ref{mainlemma},  there exists  $Q\in\mathbb{C}(z)[u_{i_1+1}^{\pm 1},\dots,u_n^{\pm 1}]$
such that 
\begin{align*}
\Delta=Qu_{1}^{m_{1}}\cdots u_{i_1}^{m_{i_1}},
\end{align*}
where $m_i\in\mathbb Z$ for $1\le i\le  i_1$.  By Lemma \ref{separate}, we have

\begin{align}\label{discriminantDe}
F(Y)=u_{1}^{s_1}P( u_{1}^{t_1} Y-c),
\end{align}
for some $s_1\in\mathbb Z$,  $P\in \CC(z)[u_{2}^{\pm1},\hdots,u_{n}^{\pm1}][W]$, $t_1\in\mathbb Z$ and $c\in \CC(z)[u_{1}^{\pm1},\hdots,u_{n}^{\pm1}]$.
From \eqref{discriminantDe}, we see that the leading coefficient of $P(W)$ in $W$ must be a power of $u_1$ since $F(Y)\in \mathbb{C}(z)[u_{1}^{\pm 1},\hdots,u_{n}^{\pm 1}][Y]$ is monic in $Y$. But since the coefficients of $P$ are in $\CC(z)[u_{2}^{\pm1},\hdots,u_{n}^{\pm1}]$, it follows that $P$ is monic in $W$. Similarly, we see that  the discriminant of $P$ with respect to $W$ is $\Delta_{P}=Qu_{2}^{m_{2}}\cdots u_{i_1}^{m_{i_1}}$, which is square-free.
 Note also that $P$ is  irreducible in $W$ over $\mathbb{C}(z)[u_{2}^{\pm 1},\hdots,u_{n}^{\pm 1}]$ since $F(Y)$ is irreducible in $Y$. Now \eqref{discriminantDe} gives  $P(u_{1}^{t_1} g-c)=u_{1}^{-s_1} F(g)=0$, which implies that $u_{1}^{t_1} g-c\in\mathcal E_q$ by induction hypothesis; since $c\in\mathcal E_q$ and $u_1\in \mathcal E_q^{*}$, we conclude $g\in \mathcal E_q$ as desired. 
 \end{proof}

The strategy of  proving  Lemma \ref{mainlemma} is motivated by the first part of the proof of   \cite[Theorem 3]{corvaja2013algebraic}.
We  will first show that thenumber of zeros of  $F'(g)$ is ``small".  Then 
we can use Theorem \ref{trunborel}, a truncated second main theorem,
 to the algebraic relation of $F'(g)$ and $u_1,\hdots,u_n$ to 
conclude the assertion on that the discriminant $\Delta$.  

\begin{proof}[of Lemma \ref{mainlemma}]
The assertion is clear when $d=1$. Therefore, we assume that $d\ge2$.
 Denote by $\alpha_{1},\hdots,\alpha_{d}$ all the roots of $F$ in a fixed algebraic closure of $\CC(z,u_{1},\hdots,u_{n})$ such that $\alpha_1=g$ is an entire function. Then we have $\Delta=\prod_{i<j}(\alpha_{i}-\alpha_{j})^{2}$, which is not zero since $F$ is irreducible.

Let $H:=F(Y)/(Y-g)=\prod_{1<i\le d}(Y-\alpha_{i})$.
The division algorithm shows that $H\in \CC(z)[u_{1}^{\pm1},\hdots,u_{n}^{\pm1},g][Y]$.
Letting $G\in \CC(z)[u_{1}^{\pm1},\hdots,u_{n}^{\pm1},g]$ be the discriminant of $H$ with respect to $Y$, we note that the entire function 
$G$ is equal to $\prod_{1<i<j\le d}(\alpha_{i}-\alpha_{j})^{2}$, which is nonzero.

Note that $F'(g)=\prod_{i>1}(g-\alpha_{i})$ is a nonzero entire function, where $F'$ denotes the formal derivative of $F$ with respect to $Y$. Hence
\begin{align}
\Delta=F'(g)^2G.\label{disc1}
\end{align}
Let $\epsilon_1$ be a sufficiently small positive number. We claim
that 
\begin{align}
N_{F'(g)}{\color{blue}(0,} r)\le_{{\rm exc}}\epsilon_1\cdot r^{q}+o(r^{q}).\label{counting}
\end{align}
By our assumption   that $\Delta\in\CC(z)[u_{1}^{\pm1},\hdots,u_{n}^{\pm1}]$ is square-free, we may write 
\begin{align}
\Delta=u_{1}^{m_1}\cdots u_{n}^{m_n}A(u_{1},\hdots,u_{n})\label{disc2}
\end{align} 
with integers $m_1,\hdots,m_n$ and a square-free polynomial $A\in \CC(z)[x_{1},\hdots,x_{n}]$. 
Since $u_{i} $, $1\le i\le n$, are entire functions without zeros, we have from \eqref{disc1} and \eqref{disc2}
that there exists a non-zero entire function $\gamma$ such that 
\begin{align}
A(u_{1},\hdots,u_{n})=F'(g)^{2}\gamma.\label{discriminant2}
\end{align}
We note that elements in $ \CC(z)$ have slow growth
with respect to $\mathbf{u}$. As $u_{1},\hdots,u_{n}$ are
algebraically independent over $ \CC(z)$, we have
\eqref{counting} from Lemma \ref{dth_power_count}.

   Since $F(g)=0$ with $F\in\CC(z,u_{1},\hdots,u_{n})[Y]$, it follows that $g$ is algebraic over $\CC(z,u_{1},\hdots,u_{n})$ and that $F'\in\CC(z,u_{1},\hdots,u_{n})[Y]$, so $F'(g)$ is also algebraic over $\CC(z,u_{1},\hdots,u_{n})$. Hence there exists a  polynomial  $R\in\CC[z][U_{1},\hdots,U_{n},Y]$ of the fewest terms such that  
\begin{align}
R(u_{1},\hdots,u_{n},F'(g))=0.\label{algebraicRelation}
\end{align}
Since $F'(g)\ne0$, we may assume that with respect to the variable $Y$, the  polynomial $R$ has a nonzero constant term. Note that \eqref{algebraicRelation} can be written as
\begin{align}
\sum_{I=(i_{0},\hdots,i_{n})\in\Sigma}c_{I}F'(g)^{i_{0}}u_{1}^{i_{1}}\cdots u_{n}^{i_{n}}=0,\label{algebraicRelation2}
\end{align}
where $\Sigma$ is the set of vectors $(i_{0},\hdots,i_{n})$ corresponding
to monomials appearing in $R$ and $c_{I}\in \CC[z]$, not identically zero.  By construction, no proper subsum of the left hand side of \eqref{algebraicRelation2} vanishes, and $\Sigma$ must contain a vector $I_0=(0,a_1,\hdots,a_n)$ starting with $0$. 

Note that in the left hand side of \eqref{algebraicRelation2}, the nonzero term corresponding to $I_0$ has only finitely many zeros. Thus there is some $h\in\CC[z]$ such that all terms in the left hand side of the following equality equivalent to \eqref{algebraicRelation2}
\begin{align}
\sum_{I=(i_{0},\hdots,i_{n})\in\Sigma}h^{-1}c_{I}F'(g)^{i_{0}}u_{1}^{i_{1}}\cdots u_{n}^{i_{n}}=0\label{algebraicRelation2red}
\end{align}
are entire functions without common zeros. 

Let $\epsilon$ be a sufficiently small positive real number.  
Applying Theorem \ref{trunborel} to \eqref{algebraicRelation2red},  with the estimate in  \eqref{counting} and Lemma \ref{gineq},
we have 
\begin{align} 
T_{F'(g)^{i_{0}}u_{1}^{i_{1}-a_{1}}\cdots u_{n}^{i_{n}-a_{n}}}(r)\le_{{\rm exc}}\epsilon\cdot r^{q}+{\rm o}(r^{q})\label{small}
\end{align}
for each $(i_{0},\hdots,i_{n})\in\Sigma$. 

On the other hand, the assumption that $u_{1},\hdots,u_{n}$
are multiplicatively independent modulo $\mathbb C$ implies that if the index vector $ J= (j_1,\hdots,j_{\ell})\ne (0,\hdots,0)$, then 
\begin{align}\label{big} 
T_{u_{1}^{j_{1}}\cdots u_{n}^{j_{n}}}(r)=\gamma_{J}r^{q}+  o(r^q)  
\end{align}
for some positive real $\gamma_{J}$. 
 We first notice that \eqref{small} implies that  if  $(a_1,\cdots,a_{\ell})\ne (0,\hdots,0)$, then the first $\ell+1$ coordinates of any other vector in $\Sigma$ with $i_0=0$ must be equal to $(0,a_1,\cdots,a_{\ell})$.
Let $M_1=U_1^{a_1}\cdots U_{\ell}^{a_{\ell}}$ and $\tilde R=M_1^{-1}R. $   We have $
{\tilde R}(u_{1},\hdots,u_{n},F'(g))
=   \sum_{I=(i_{0},\hdots,i_{n})\in\Sigma'}c_{I}F'(g)^{i_{0}}u_{1}^{i_{1}}\cdots u_{n}^{i_{n}}$, where $\Sigma':= 
\{ (i_{0},i_1-a_1,\hdots,i_{\ell}-a_{\ell},i_{\ell+1}\hdots,i_{n}): (i_{0},\hdots,i_{n})\in\Sigma\}$. Note  
that if  $(0,i_1\hdots,i_{n})\in\Sigma'$, then $i_1=\cdots=i_{\ell}=0$.
  Since ${\tilde R}(u_{1},\hdots,u_{n},F'(g))=0$, it follows for any $(i_0,i_1,\hdots,i_n), (j_0,j_1,\hdots,j_n)\in\Sigma'$ that
\begin{align*}
 &T_{u_{1}^{i_{1}j_0-i_0j_1}\cdots u_{n}^{i_{n}j_0-i_0j_n}}(r)\\
\le & j_0 T_{F'(g)^{i_{0}}u_{1}^{i_{1}}\cdots u_{n}^{i_{n}}} (r)+ i_0 T_{F'(g)^{j_{0}}u_{1}^{j_{1}}\cdots u_{n}^{j_{n}}}(r)\\
\le & \epsilon r^{q}+{\rm o}(r^{q}),
\end{align*}
where the second inequality is obtained as \eqref{small} is;
then \eqref{big} implies that $j_0(i_0,i_1,\cdots,i_{\ell})=i_0(j_0,j_1,\cdots,j_{\ell})$.
Then there is some tuple $(c_0,\hdots,c_{\ell})$ of integers with $c_0\ge0$ such that for every $I=(i_0,i_1,\hdots,i_n)\in\Sigma'$ we have that $(i_0,i_1,\cdots,i_{\ell})=e_I(c_0,\hdots,c_{\ell})$ for some nonnegative integer $e_I$.  Hence we see that 
\begin{align}\label{R1}
0=\tilde R(u_{1},\hdots,u_{n},F'(g))=R_1(u_1^{c_1}\hdots u_{\ell}^{c_{\ell}}F'(g)^{c_0})\end{align}
with some
$R_1\in\CC[z,u_{\ell+1},\dots,u_{n}][X]$. 
 
It  follows immediately from \eqref{R1} that $u_1^{c_1}\hdots u_{\ell}^{c_{\ell}}F'(g)^{c_0}$ is algebraic  over $\CC(z)(u_{\ell+1},\hdots,u_{n})$. Letting $u:=u_1^{c_1}\hdots u_{\ell}^{c_{\ell}}$ and recalling that $\alpha_2,\hdots,\alpha_n$ are all the conjugates of $g=\alpha_1$     over $\CC(z)(u_1,\hdots,u_n)$, we see that $uF'(\alpha_{i})^{c_0}$ is algebraic  over $\CC(z)(u_{\ell+1},\hdots,u_{n})$ for all $i=1,\hdots,d$.
Since $\Delta=\prod_{i=1}^d F'(\alpha_{i})$, we deduce that $u^d\Delta^{c_0}$ is algebraic over $\CC(z)(u_{\ell+1},\dots,u_{n})$. For $j=1,\hdots,\ell$, by considering the minimal polynomial of $u^d\Delta^{c_0}$ over $\CC(z)(u_{\ell+1},\dots,u_{n})$, we find that the differentiation of $u^d\Delta^{c_0}$ with respect to the transcendental element $u_j$ over $\CC(z)$ is zero. Hence we conclude that $u^d\Delta^{c_0}\in\mathbb{C}[z][u_{\ell+1}^{\pm 1},\dots,u_n^{\pm 1}]$ and thus the desired conclusion follows. 
\end{proof}

\end{document}